\numberwithin{equation}{section}\newtheorem{theorem}{Theorem}[section]
\newtheorem{lemma}[theorem]{Lemma}
\newtheorem{proposition}[theorem]{Proposition}\theoremstyle{remark}
\newtheorem{remark}{Remark}[section]
\theoremstyle{definition}
\newtheorem{definition}[theorem]{Definition}
\def\ni{\noindent}
\newcommand{\p}{\widetilde{p}}
\newcommand{\q}{\widetilde{q}}
\newcommand{\Lqqtilde}{L^{q}_{|x|}L^{\widetilde{q}}_{\theta}}
\newcommand{\Lpptilde}{L^{p}_{|x|}L^{\widetilde{p}}_{\theta}}
\newcommand{\Lpptildealphazero}{L^{p_{0}}_{|x|^{\alpha_{0} p_{0}}d|x|}L^{\widetilde{p}_{0}}_{\theta}}
\newcommand{\Lqqtildebetazero}{L^{q_{0}}_{|x|^{\beta_{0} q_{0}}d|x|}L^{\widetilde{q}_{0}}_{\theta}}
\newcommand{\Lpptildealphauno}{L^{p_{1}}_{|x|^{\alpha_{1} p_{1}}d|x|}L^{\widetilde{p}_{1}}_{\theta}}
\newcommand{\Lqqtildebetauno}{L^{q_{1}}_{|x|^{\beta_{1} q_{1}}d|x|}L^{\widetilde{q}_{1}}_{\theta}}
\newcommand{\Lpptildealphaxi}{L^{p_{\xi}}_{|x|^{\alpha_{\xi} p_{\xi}}d|x|}L^{\widetilde{p}_{\xi}}_{\theta}}
\newcommand{\Lqqtildebetaxi}{L^{q_{\xi}}_{|x|^{\beta_{\xi} q_{\xi}}d|x|}L^{\widetilde{q}_{\xi}}_{\theta}}
\newcommand{\Lrqqtilde}{L^{r}_{t}L^{q}_{|x|}L^{\widetilde{q}}_{\theta}}
\newcommand{\Lspptilde}{L^{s}_{t}L^{p}_{|x|}L^{\widetilde{p}}_{\theta}}
\newcommand{\Rn}{\mathbb{R}^{n}}
\newcommand{\Rpiu}{\mathbb{R}^{+}}
\title[Angular Integrability and Navier--Stokes equation]
{Regularity criteria with angular integrability for the Navier--Stokes equation}
\date{\today}    
\author{Renato Luc\`a}
\address{Instituto de Ciencias Matematicas, Consejo Superior de Investigaciones Cientificas, Madrid, 28049, Spain.
Supported by the ERC grant 277778 and MINECO grant SEV-2011-0087 (Spain).}
\email{renato.luca@icmat.es}
\begin{document}

 \begin{abstract}
We give new {\it a priori} assumptions on weak solutions of the Navier--Stokes equation so as to be able to conclude that they are smooth. The regularity criteria are given in terms of mixed radial-angular weighted Lebesgue space norms.    
\end{abstract}

\maketitle

\section{Introduction and main results}
We consider the Cauchy problem 
on $(0,T) \times \mathbb{R}^{n}$

\begin{equation}\label{CauchyNS}
\left \{
\begin{array}{rcl}
\partial_{t}u + (u \cdot \nabla) u  -\Delta u & = & -\nabla P  \\
\nabla \cdot u & = & 0 \\
u(x,0) & = & u_{0}(x). 
\end{array}\right.
\end{equation}

It describes the motion of a viscous 
incompressible fluid in the absence of external forces,
where
$u$ is the velocity field and $P$ is the pressure. 

The first equation is the Newton law while the 
second follows by the incompressibility of the fluid. 
In order to require incompressibility at time $t=0$ 
it is necessary to restrict to initial data $u_{0}$ such 
that $\nabla \cdot u_{0}=0$.

We shall use the same notation for the norm of scalar, 
vector or tensor quantities, for instance: 
$$
\textstyle
\| P \|^{2}_{L^{2}} := \int P^{2} \ dx, 
\qquad
\| u \|^{2}_{L^{2}} := \int \sum_{i=1}^{n} u_{i}^{2} \ dx,
\qquad
\| \nabla u \|^{2}_{L^{2}} := \int \sum_{i,j=1}^{n}(\partial_{i} u_{j})^{2} \ dx
$$ 
and we often write simply $u \in L^{2}(\mathbb{R}^{n})$ instead of $u \in [{L^{2}(\mathbb{R}^{n})}]^{n}$.

The well-posedness of (\ref{CauchyNS}) is still open even if many partial results have been obtained. In \cite{Hopf, Ler} the authors proved global existence of weak solutions for initial data in  $L^{2}$ but a 
satisfactory well-posedness theory is basically developed only in the 
case of small initial data or data with a peculiar geometric structure.

In this scenario it is useful to establish {\it a priori} conditions under which uniqueness and regularity of the weak solutions are guaranteed. Results of this kind are usually called regularity criteria.

In this paper we focus on some classical regularity criteria \cite{CKN, Ser, Sohr, Struwe} and their extension to the setting of weighted Lebesgue spaces \cite{YongZhou}. In particular we show how the results in \cite{YongZhou} can be improved under the hypothesis of additional angular integrability.

The regularity is basically ensured by boundedness assumptions
on quantities like $u, \nabla u, \nabla \times u$ in  
suitable critical spaces. 
A simple regularity criterion is for instance 
\begin{equation}\label{SerrinNorm}
\|u\|_{L^{s}_{T}L^{p}_{x}} := \left( \int_{0}^{T} \left( \int_{\mathbb{R}^{n}} |u(t,x)|^{p} \ dx \right)^{\frac{s}{p}} \ dt \right)^{\frac{1}{s}} < +\infty, 
\qquad 
\frac{2}{s} + \frac{n}{p} \leq 1.
\end{equation}     
Notice that in the endpoint case (\ref{SerrinNorm}) is invariant with respect to
\begin{equation}\label{Scaling}
u(t,x) \rightarrow \lambda u (\lambda^{2} t, \lambda x ),
\end{equation}
that is the natural scaling of (\ref{CauchyNS}). 
In \cite{Ser} smoothness in space variables has been obtained in the case $\frac{2}{s} + \frac{n}{p} < 1$, while the endpoints have been fixed in \cite{Esc,Giga, Sohr, Struwe, WW}. 
We recall the following

\begin{definition}[\cite{CKN}]
We say that a point $(\bar{t},\bar{x}) \in (0,T)
\times \mathbb{R}^{3}$ is 
\emph{regular} for a solution $u(t,x)$ of
(\ref{CauchyNS}) if $u$ is essentially bounded on a neighbourhood of
$(\bar{t},\bar{x})$.
(In this case one can prove that $u(t,x)$ is smooth near
$(\bar{t},\bar{x})$, see for instance \cite{Ser}). 
We say that a set is \emph{regular} if all its points
are regular.
\end{definition}

Let us also recall that  $(0,T) \times \mathbb{R}^{n}$
is regular provided that (\ref{SerrinNorm}) is satisfied
with $2/s + n/p = 1$
(see for instance \cite{Sohr, Struwe}).

Then we focus on the weighted norm approach:

\begin{theorem}[\cite{YongZhou}]\label{YZTheoremSS}
Let $n \geq 3$ and $u_{0} \in L^{2}(\mathbb{R}^{n})$ be  
a divergence free vector field. 
Let then $u$ be a weak solution of (\ref{CauchyNS}) and
$\bar{x} \in \mathbb{R}^{n}$ such that
\begin{equation}
\||x-\bar{x}|^{1 - \frac{n}{2}}u_{0}\|_{L^{2}_{x}} < +\infty,
\end{equation}
 
\begin{equation}\label{MainAssumptYZ}
\||x-\bar{x}|^{\alpha}u(x,t)\|_{L^{s}_{T}L^{p}_{x}} < +\infty,
\end{equation}
with
\begin{equation}
\begin{array}{ll}
\frac{2}{s} + \frac{n}{p}=1 - \alpha, &  -1 \leq \alpha <1 \\
  &  \\
\frac{2}{1-\alpha} < s < +\infty, & \frac{n}{1-\alpha} < p< +\infty ;
\end{array}
\end{equation}
or
\begin{equation}
\||x-\bar{x}|^{\alpha}u(x,t)\|_{L^{\frac{2}{1-\alpha}}_{T}L^{\infty}_{x}} < +\infty, \quad  -1 < \alpha < 1;
\end{equation}
or
\begin{equation}
\sup_{t \in (0,T)}\||x-\bar{x}|^{\alpha}u(x,t)\|_{L^{\frac{n}{1-\alpha}}_{x}} < \varepsilon, \quad  -1 \leq \alpha \leq 1;
\end{equation}
with $\varepsilon$ sufficiently small. Then $(0,T) \times \{ \bar{x} \} $ is a regular set.
\end{theorem}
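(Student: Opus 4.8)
The plan is to run a weighted energy estimate centred at $\bar{x}$ and then feed it into the local ($\varepsilon$-)regularity theory. Write $w(x):=|x-\bar{x}|^{2-n}$, so that $w^{1/2}=|x-\bar{x}|^{1-\frac{n}{2}}$, $|\nabla w|=(n-2)|x-\bar{x}|^{1-n}$, and $-\Delta w=c_{n}\delta_{\bar{x}}$ with $c_{n}>0$. Testing the first equation of \eqref{CauchyNS} against $w\,u$ and using $\nabla\cdot u=0$ gives, formally,
\[
\frac{d}{dt}\int w|u|^{2}+2\int w|\nabla u|^{2}+c_{n}|u(\bar{x},t)|^{2}=\int(\nabla w\cdot u)\bigl(|u|^{2}+2P\bigr),
\]
where the Laplacian produces the two nonnegative terms on the left (one of them the delta), the convection term produces $\tfrac12\int(\nabla w\cdot u)|u|^{2}$ after one integration by parts using incompressibility, and the pressure produces $\int(\nabla w\cdot u)P$. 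To make this legitimate for a Leray--Hopf weak solution I would first replace $w$ by the mollified, bounded weight $w_{\delta}:=(|x-\bar{x}|^{2}+\delta^{2})^{(2-n)/2}$ (for which $\Delta w_{\delta}\le 0$), carry out the computation on the approximations defining $u$, which satisfy the energy equality, discard the favourable term $\tfrac12\int\Delta w_{\delta}|u|^{2}\le 0$, and finally let the approximation parameter and then $\delta$ tend to $0$. This yields, for a.e. $t\in(0,T)$,
\[
\bigl\|w^{1/2}u(t)\bigr\|_{L^{2}_{x}}^{2}+\int_{0}^{t}\bigl\|w^{1/2}\nabla u\bigr\|_{L^{2}_{x}}^{2}\,ds\ \lesssim\ \bigl\|w^{1/2}u_{0}\bigr\|_{L^{2}_{x}}^{2}+\int_{0}^{t}\!\!\int|x-\bar{x}|^{1-n}\bigl(|u|^{3}+|u|\,|P|\bigr)\,dx\,ds .
\]

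The core is to absorb the right-hand side. For the cubic term, split $|x-\bar{x}|^{1-n}|u|^{3}=\bigl(|x-\bar{x}|^{\alpha}|u|\bigr)\bigl(|x-\bar{x}|^{(1-n-\alpha)/2}|u|\bigr)^{2}$, apply Hölder in $x$ with exponents $p,p'$, and then the Hardy--Caffarelli--Kohn--Nirenberg inequality
\[
\bigl\||x-\bar{x}|^{(1-n-\alpha)/2}f\bigr\|_{L^{2p'}_{x}}\ \lesssim\ \bigl\|w^{1/2}\nabla f\bigr\|_{L^{2}_{x}}^{1/s'}\bigl\|w^{1/2}f\bigr\|_{L^{2}_{x}}^{1/s},
\]
whose exponents are dictated exactly by the scaling identity $\tfrac{2}{s}+\tfrac{n}{p}=1-\alpha$ and which is admissible precisely in the stated ranges ($p>n/(1-\alpha)$ and the compatibility $\alpha\ge-1$ being the relevant conditions for the CKN inequality). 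For the pressure term I would use $P=R_{i}R_{j}(u_{i}u_{j})$ with $R_{i}$ the Riesz transforms together with the weighted Calderón--Zygmund estimate — valid since $|x-\bar{x}|^{(1-n-\alpha)p'}$ is an $A_{p'}$ Muckenhoupt weight in this range (here $1<p'<\infty$) — to bound $\||x-\bar{x}|^{1-n-\alpha}P\|_{L^{p'}_{x}}$ by $\||x-\bar{x}|^{(1-n-\alpha)/2}u\|_{L^{2p'}_{x}}^{2}$, so the pressure term obeys the same estimate as the cubic one. Setting $A(t):=\|w^{1/2}u(t)\|_{L^{2}}^{2}$, $B(t):=\|w^{1/2}\nabla u(t)\|_{L^{2}}^{2}$ and $g(t):=\||x-\bar{x}|^{\alpha}u(t)\|_{L^{p}_{x}}$, we reach $A(t)+\int_{0}^{t}B\lesssim A(0)+\int_{0}^{t}g\,B^{1/s'}A^{1/s}$; Young's inequality with exponents $(s',s)$ rewrites the last integral as $\varepsilon\int_{0}^{t}B+C_{\varepsilon}\int_{0}^{t}g^{s}A$, so choosing $\varepsilon$ small and applying Grönwall with $g\in L^{s}(0,T)$, i.e. with \eqref{MainAssumptYZ}, gives
\[
\sup_{t\in(0,T)}\bigl\|w^{1/2}u(t)\bigr\|_{L^{2}_{x}}^{2}+\int_{0}^{T}\bigl\|w^{1/2}\nabla u\bigr\|_{L^{2}_{x}}^{2}\,dt<+\infty .
\]
In the endpoint $p=\infty$ I would instead factor out $\||x-\bar{x}|^{\alpha}u\|_{L^{\infty}_{x}}\in L^{2/(1-\alpha)}_{T}$ and use the Hardy inequality $\||x-\bar{x}|^{(1-n-\alpha)/2}f\|_{L^{2}}\lesssim\|w^{1/2}\nabla f\|_{L^{2}}^{(1+\alpha)/2}\|w^{1/2}f\|_{L^{2}}^{(1-\alpha)/2}$, which leads to the same Young/Grönwall scheme; in the smallness case ($s=\infty$, $p=n/(1-\alpha)$) the cubic and pressure contributions are bounded by $C\varepsilon\int_{0}^{t}B$ and absorbed directly once $\varepsilon$ is small (the endpoint pressure estimate requiring a localization of $P$ rather than the weighted Calderón--Zygmund bound). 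In every case the outcome is the same finite weighted bound.

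Finally I would upgrade this to regularity. Fix $\bar{t}\in(0,T)$ and a small $r>0$ and set $Q_{r}:=B_{r}(\bar{x})\times(\bar{t}-r^{2},\bar{t})$. Since $|x-\bar{x}|^{2-n}\ge r^{2-n}$ on $B_{r}(\bar{x})$,
\[
\frac{1}{r^{\,n-2}}\int_{Q_{r}}|\nabla u|^{2}\,dx\,dt\ \le\ \int_{\bar{t}-r^{2}}^{\bar{t}}\bigl\|w^{1/2}\nabla u(t)\bigr\|_{L^{2}_{x}}^{2}\,dt\ \longrightarrow\ 0\quad\text{as }r\to 0,
\]
by absolute continuity of the (now finite) time integral, and likewise $\|u\|_{L^{\infty}_{t}L^{2}(B_{r}(\bar{x}))}^{2}\le r^{\,n-2}\sup_{t}\|w^{1/2}u\|_{L^{2}_{x}}^{2}\to 0$. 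By Gagliardo--Nirenberg these control $\int_{Q_{r}}|u|^{3}$, and, after the standard splitting of $P$ into a local Calderón--Zygmund part and a part harmonic on $B_{r}(\bar{x})$, also the pressure contribution; hence the scale-invariant quantity in the Caffarelli--Kohn--Nirenberg $\varepsilon$-regularity criterion \cite{CKN} (and its higher-dimensional analogues) falls below the absolute threshold on $Q_{r}$ for $r$ small enough. Therefore every point $(\bar{x},\bar{t})$ with $\bar{t}\in(0,T)$ is regular, and $(0,T)\times\{\bar{x}\}$ is a regular set.

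The main obstacle is twofold. First, making the singular-weight energy estimate rigorous for a merely weak solution near the point $\bar{x}$ — the mollification-and-limit procedure above, together with the usual justification of the integrations by parts for suitable weak solutions. Second, the exponent bookkeeping in the absorption step: one must verify that the Hölder split, the Hardy--Caffarelli--Kohn--Nirenberg interpolation, and the Muckenhoupt $A_{p'}$ requirement in the weighted pressure estimate are simultaneously met, and the pressure term in particular is the delicate one in the endpoint regimes. It is precisely this that constrains the parameters to $-1\le\alpha<1$, $p>n/(1-\alpha)$, $s>2/(1-\alpha)$, with the endpoint cases $p=\infty$ and $s=\infty$ handled by the variants described above.
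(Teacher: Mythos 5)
This theorem is not proved in the paper: it is imported verbatim from \cite{YongZhou} and used as a black box (the paper's own contribution, Theorems \ref{OurYZTheorem} and \ref{OurYZTheoremLoc}, reduces to it or to the Serrin condition \eqref{SerrinNorm} via heat/Oseen kernel estimates in mixed radial--angular norms). So there is no in-paper argument to compare against; what you have written is essentially a reconstruction of Zhou's original proof, and its skeleton is right. The weighted energy identity with $w=|x-\bar{x}|^{2-n}$, the splitting $|x-\bar{x}|^{1-n}|u|^{3}=(|x-\bar{x}|^{\alpha}|u|)\,(|x-\bar{x}|^{(1-n-\alpha)/2}|u|)^{2}$, and the Caffarelli--Kohn--Nirenberg interpolation whose exponents are forced by $\tfrac2s+\tfrac np=1-\alpha$ are exactly the mechanism that produces the stated parameter ranges (your scaling check is consistent: the condition $\alpha\ge-1$ is the ``no weight gain'' constraint $1-\tfrac n2\ge\tfrac{1-n-\alpha}{2}$, and $p>n/(1-\alpha)$ is equivalent to the Muckenhoupt bound $(1-n-\alpha)p'>-n$ needed for the weighted Calder\'on--Zygmund estimate on $P=R_{i}R_{j}(u_{i}u_{j})$). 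The Young--Gr\"onwall absorption with $g=\||x-\bar{x}|^{\alpha}u\|_{L^{p}_{x}}\in L^{s}_{T}$ is also the standard closing step.

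Two points deserve flagging as genuine gaps rather than routine technicalities. First, your justification of the weighted energy estimate for a bare weak solution is circular as written: the hypothesis \eqref{MainAssumptYZ} is on $u$, not on the approximating sequence, so the cubic and pressure terms cannot be passed to the limit directly; one must either work with suitable weak solutions and the generalized (local) energy inequality, or prove the estimate for a local strong solution and invoke weak--strong uniqueness. Second, the final step converts $\sup_{t}\|w^{1/2}u\|_{L^{2}}^{2}+\int_{0}^{T}\|w^{1/2}\nabla u\|_{L^{2}}^{2}\,dt<\infty$ into regularity via the $\varepsilon$-regularity criterion of \cite{CKN}; that criterion again requires a \emph{suitable} weak solution and is only available in this form for $n=3$ (with known extensions to $n=4$), whereas the statement here is asserted for all $n\ge3$. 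So either the conclusion must be restated for suitable weak solutions with $n$ restricted, or the last step must be replaced by a local Serrin-type criterion near $\bar{x}$ deduced from the weighted bound. Neither issue invalidates the strategy, but both are exactly where the quoted hypotheses earn their keep and should not be waved through.
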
  

\begin{remark}
The condition $\frac{2}{s} + \frac{n}{p}=1 - \alpha$ makes the
norm
$$
\||x-\bar{x}|^{\alpha}u(x,t)\|_{L^{s}_{T}L^{p}_{x}}   
$$
scaling invariant with respect to
$$
u(t, x-\bar{x}) \rightarrow \lambda u(\lambda^{2} t, \lambda (x-\bar{x}) ).
$$ 
\end{remark}

Our goal is to point out the local aspect of Theorem \ref{YZTheoremSS}: 
for each $t \in (0,T)$ there is a neighborhood\footnote{We mean a neighborhood in the space variables for each fixed time.} 
$\Omega_{t,\bar{x}}$ of $\bar{x}$ such that $u$ 
is smooth in $\{ t \} \times \Omega_{t,\bar{x}}$.

The restriction to a neighborhood of $\bar{x}$ can be 
heuristically explained in the case $\alpha < 0$: the weight
morally localizes the norm around $\bar{x}$ and a loss of 
information at infinity occurs. 

We shall show how to recover this information by a 
suitable amount of angular regularity (if $\alpha < 0$) and 
how to do the same in the case $\alpha >0$ even if 
weaker angular regularity is assumed. 


By translations it is possible to restrict to the case $\bar{x} =0$. 
All the following results
are of course true provided with the norms and weights 
centered at $\bar{x} \neq 0$.

In order to quantify precisely our notion of angular 
regularity we define the norms
\begin{equation}\label{MixedNorms}
\begin{array}{lcl}
  \|f\|_{L^{p}_{|x|}L^{\p}_{\theta}}& := &
  \left(
    \int_{0}^{+\infty}
    \|f(\rho\ \cdot\ )\|^{p}_{L^{\p}(\mathbb{S}^{n-1})}
    \rho^{n-1}d \rho
  \right)^{\frac1p}, \\
  \|f\|_{L^{\infty}_{|x|}L^{\p}_{\theta}}& := &
  \sup_{\rho>0}\|f(\rho\ \cdot\ )\|_{L^{\p}(\mathbb{S}^{n-1})}.
\end{array}
\end{equation}
If $p=\p$ the norms reduce to the usual $L^{p}$ norms
\begin{equation*}
  \|u\|_{L^{p}_{|x|}L^{p}_{\theta}} =
  \|u\|_{L^{p}(\mathbb{R}^{n})},
\end{equation*}
while for radial functions the value of $\p$ is irrelevant
\begin{equation*}
  \text{$u$ radial}\quad\implies\quad 
  \|u\|_{L^{p}L^{\p}}\simeq \|u\|_{L^{p}(\mathbb{R}^{n})}
  \quad \forall p,\p\in[1,\infty].
\end{equation*}
Notice also that the norms (ignoring the constants) are increasing in $\p$.

The idea of distinguishing radial and angular directions 
is not new and
has proved successful in the context of
Strichartz estimates and dispersive equations
(see for instance
\cite{DanconaCacciafesta11-a}, \cite{ChoOzawa09-a},
\cite{FangWang08-a},
\cite{MachiharaNakamuraNakanishi05-a},
\cite{Rogers}
\cite{Sterbenz05-a}).

We also notice that the mixed angular-radial norms 
have the same scaling of their classical counterparts, in fact 
$$
\| |x|^{\alpha} u(t,x)\|_{L^{s}_{T}L^{p}_{|x|}L^{\p}_{\theta}} 
$$
is invariant with respect to
$$
u(t,x)\rightarrow  \lambda u (\lambda^{2} t, \lambda x),
$$   
provided that $\frac{2}{s} + \frac{n}{p} = 1-\alpha$.

We obtain new values $\p_{G},\p_{L}$ for 
the angular integrability such that 
global and local\footnote {Here and in the following we mean global and local in space.} 
regularity are, respectively, achieved:

\begin{equation}\label{PtildeDef}
\p_{L} := \left \{
\begin{array}{lcr}
 \frac{2(n-1)p}{(2 \alpha +1)p +2(n-1)}  &  \mbox{if} & -\frac{1}{2} \leq \alpha < 0  \\
 && \\
 \frac{2(n-1)p}{p +2(n-1)}  &  \mbox{if} & 0 \leq \alpha < 1, 
\end{array}\right.
\end{equation}
\begin{equation}\label{PtildeGDef}
\p_{G} := \left \{
\begin{array}{lcr}
 \max \left( 2n, \frac{(n-1)p}{\alpha p + n-1} \right)  &  \mbox{if} & \frac{1-n}{2} < \alpha < 0  \\
 && \\
  \frac{(n-1)p}{\alpha p + n-1}    &  \mbox{if} & 0 \leq \alpha < \frac{1}{•2}.
\end{array}\right.
\end{equation}
Notice that neither the quantities are increasing in $\alpha$,  
$$
\p_{L} < \p_{G}, \quad \mbox{if} \quad \alpha <  1/2, \qquad
\p_{L} = \p_{G} \quad \mbox{if} \quad \alpha = 1/2;
$$
and\footnote{Notice that $\p_{L} = p$ in the endpoint case $\alpha = -1/2$.}
\begin{equation}\label{pLpGalphaPositivo}
\p_{L} \leq p < \p_{G}, \qquad \mbox{if} \quad \alpha <0, 
\end{equation}
\begin{equation}\label{pLpGalphaNegativo}
\p_{L} < \p_{G} < p, \qquad \mbox{if} \quad \alpha > 0;
\end{equation}
this is in fact consistent with the previous heuristic. 
For simplcity we state our results in the case of Schwartz initial data.
In Section 4 we show how to refine this assumption.



\begin{theorem}\label{OurYZTheorem}
Let $n \geq 3$ and $u_{0}$ be a divergence free vector 
field with each component in the Schwartz class. Let also $u$ be a weak solution
of (\ref{CauchyNS}) satisfying (\ref{HPEquiv}). Then $(0,T) \times \Rn$ is a regular set provided that

\begin{equation}\label{OurYZCondition0}
\alpha \in ((1-n)/2,0), 
\quad \max \left( 2, \frac{n}{1- \alpha} \right) < p \leq \frac{1-n}{\alpha},
\quad or \quad p=2,
\end{equation}
and

\begin{equation}\label{OurYZBound1}
\| |x|^{\alpha} u \|_{L^{s}_{T}L^{p}_{|x|}L^{\p}_{\theta}} < +\infty,
\end{equation}
with
\begin{equation}\label{OurYZCondition1}
\frac{2}{s}+ \frac{n}{p} = 1-\alpha,
\end{equation} 

\begin{equation}\label{OurYZCondition2} 
\max \left( 2, \frac{2}{1-\alpha} \right) < s < +\infty, \quad or \quad
s = \frac{2}{1-\alpha},
\end{equation}

\begin{equation}\label{OurYZCondition3}
\p \geq \p_{G} := \max \left(2n, \frac{(n-1)p}{\alpha p +n -1}\right);
\end{equation}
or  
\begin{equation}\label{OurYZCondition0bis}
\alpha \in [0,1/2), \quad 
2n < p \leq +\infty,
\end{equation}
and 
\begin{equation}\label{OurYZBound1bis}
\| |x|^{\alpha} u \|_{L^{s}_{T}L^{p}_{|x|}L^{\p}_{\theta}} < +\infty,
\end{equation}
with
\begin{equation}\label{OurYZCondition1bis}
\frac{2}{s}+ \frac{n}{p} = 1-\alpha,
\end{equation} 

\begin{equation}\label{OurYZCondition2bis} 
\frac{2}{1-\alpha} \leq s < +\infty,
\end{equation}

\begin{equation}\label{OurYZCondition3bis}
\p \geq \p_{G} := \frac{(n-1)p}{\alpha p +n -1}.
\end{equation}

\end{theorem}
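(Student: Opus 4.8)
The strategy is to reduce Theorem~\ref{OurYZTheorem} to the weighted $L^{s}_{T}L^{p}_{x}$ criterion of Theorem~\ref{YZTheoremSS} (global version over all of $\Rn$), by upgrading angular integrability into radial integrability at the expense of a different power $p$ and a different weight exponent $\alpha$. Concretely, I would establish an embedding of the form
\begin{equation}\label{PlanEmbedding}
\big\| |x|^{\alpha} u \big\|_{L^{q}_{x}} \lesssim \big\| |x|^{\beta} u \big\|_{L^{p}_{|x|}L^{\p}_{\theta}},
\end{equation}
valid for a suitable admissible pair $(q,\beta)$ with $\frac{n}{q} = \frac{n}{p} - (\alpha-\beta)$ up to the scaling-dictated relation, where on the right one only demands $\p \geq \p_{G}$. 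Summing the resulting bound in time (the time exponent $s$ is unchanged, since the weighted angular-radial norm has the same scaling) then feeds directly into the hypothesis \eqref{MainAssumptYZ} of Theorem~\ref{YZTheoremSS}, which yields regularity of $(0,T)\times\{0\}$; the extra input is that the embedding \eqref{PlanEmbedding} is \emph{global} in $x$, so one recovers regularity on all of $\Rn$ rather than just near the origin, which is exactly the improvement being claimed.

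The core analytic ingredient is a Sobolev-type inequality on the sphere combined with a one-dimensional Hardy inequality in the radial variable. I would first fix $\rho>0$, write $u(\rho\,\cdot\,)$ on $\mathbb{S}^{n-1}$, and use the fact that for a divergence-free vector field the angular derivatives are controlled (after passing to the vorticity or the gradient, via the equation and the usual parabolic smoothing) to trade angular smoothness for angular integrability: this is where $\p_{G}$ enters, being precisely the exponent for which $L^{\p_{G}}(\mathbb{S}^{n-1})$ embeds into the space where the radial slice of $|x|^{\alpha}u$ lives. The choice $\p_G = \max\!\big(2n, \tfrac{(n-1)p}{\alpha p+n-1}\big)$ is dictated by two competing constraints: the term $\tfrac{(n-1)p}{\alpha p+n-1}$ comes from balancing the angular Sobolev gain against the radial weight $|x|^{\alpha}$ and the target exponent dictated by \eqref{OurYZCondition1}, while the floor $2n$ (present only for $\alpha<0$) comes from an additional requirement needed to close a Hardy-type estimate near the origin when the weight is singular there — this is the manifestation of the "loss of information at infinity/at the origin" mentioned in the heuristic. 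After the pointwise-in-$\rho$ angular estimate, one integrates $\rho^{n-1}\,d\rho$ and applies the radial Hardy inequality to absorb the discrepancy between the weight $|x|^{\alpha}$ and whatever weight naturally appears; the hypotheses \eqref{OurYZCondition0}--\eqref{OurYZCondition2} and \eqref{OurYZCondition0bis}--\eqref{OurYZCondition2bis} on the ranges of $p$, $s$, $\alpha$ are exactly what make the Hardy inequality applicable (correct sign of the weight exponent, non-endpoint integrability).

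For the two cases I would proceed in parallel. When $\alpha\in((1-n)/2,0)$ the weight is singular at the origin, so the delicate point is the behavior near $x=0$: here the extra constraint $p\leq \tfrac{1-n}{\alpha}$ (equivalently $\alpha p \geq 1-n$, so that $\alpha p + n-1\geq 0$ and $\p_G$ is well-defined and finite) and the floor $2n$ in $\p_G$ are both needed, and one must check the borderline subcases $p=2$ and $s=\tfrac{2}{1-\alpha}$ separately since there the target pair in Theorem~\ref{YZTheoremSS} is an endpoint (the $L^{\infty}_x$ or the small-data $L^{n/(1-\alpha)}_x$ alternative, respectively). When $\alpha\in[0,1/2)$ the weight is locally bounded, the analysis simplifies, the floor $2n$ is no longer needed, and $\p_G = \tfrac{(n-1)p}{\alpha p+n-1}$ throughout; the condition $p>2n$ guarantees $\p_G < p$ (consistent with \eqref{pLpGalphaNegativo} once $\alpha>0$), i.e.\ one genuinely only needs angular integrability weaker than full $L^p$. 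I expect the main obstacle to be the bookkeeping of the endpoint cases — ensuring that the degenerate choices $p=2$, $s=\tfrac{2}{1-\alpha}$, and $p=\infty$ land on valid (possibly small-data) alternatives of Theorem~\ref{YZTheoremSS} — together with verifying that the angular Sobolev embedding constant and the Hardy constant do not blow up at the boundary of the stated parameter ranges; the interior estimate \eqref{PlanEmbedding} itself is a fairly standard Sobolev-on-$\mathbb{S}^{n-1}$ plus Hardy computation once the exponents are correctly matched.
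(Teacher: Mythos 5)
Your plan takes a genuinely different route from the paper, and it has gaps that I do not think can be repaired as stated. The central ingredient \eqref{PlanEmbedding} is, in the regime that matters ($\alpha<0$, target exponent $q$ with $\frac nq=\frac np+\alpha<\frac np$), a \emph{reverse} Hardy/Stein--Weiss inequality with no smoothing operator, and it is false: for radial $u$ the value of $\p$ is irrelevant, and testing on the indicator of a thin annulus $\{R<|x|<R+\delta\}$ shows the ratio $\|u\|_{L^q}/\||x|^{\alpha}u\|_{L^{p}}$ grows like $R^{-\alpha/n}\to\infty$. One cannot gain radial integrability from a negative power weight by a bare function-space embedding; extra angular integrability penalizes angular concentration (this is exactly the translation test giving the necessary condition $\Lambda(\alpha,p,\p)\ge 0$, i.e. $\p\ge\frac{(n-1)p}{\alpha p+n-1}$) but does nothing for radial profiles. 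Your proposed fix --- controlling angular derivatives of $u$ ``via the equation and the usual parabolic smoothing'' so as to run a Sobolev embedding on $\mathbb{S}^{n-1}$ --- is circular: quantitative control of derivatives of $u$ is precisely what is being proved. Finally, even if an embedding into a weighted $L^{q}_{x}$ space held, feeding it into Theorem~\ref{YZTheoremSS} only yields regularity of $(0,T)\times\{0\}$ (the conclusion of that theorem is tied to the center of the weight); it cannot give regularity of $(0,T)\times\Rn$ unless you verify its hypothesis uniformly over all centers $\bar x$, which an origin-centered estimate does not do.

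The paper's actual argument supplies the missing gain of integrability through the \emph{smoothing of the heat and Oseen kernels} rather than through an embedding. One writes the Duhamel formula $u=e^{t\Delta}u_{0}-\int_{0}^{t}e^{(t-s)\Delta}\mathbb{P}\nabla\cdot(u\otimes u)\,ds$ and aims at the unweighted Serrin criterion $\|u\|_{L^{r}_{T}L^{q}_{x}}<\infty$, $\frac2r+\frac nq=1$ (not at Theorem~\ref{YZTheoremSS}). The linear term is handled by time-decay estimates for $e^{t\Delta}$ in weighted mixed radial--angular norms plus real interpolation (Proposition~\ref{IDecayCor}); the nonlinear term --- which your plan never addresses --- is handled by a bilinear estimate $\|II\|_{L^{r}_{T}L^{q}_{x}}\lesssim\||x|^{\alpha}u\|^{2}_{L^{s}_{T}L^{p}_{|x|}L^{\p}_{\theta}}$ (Proposition~\ref{DDecayCor}), both resting on the Stein--Weiss-type convolution inequality with angular integrability (Lemma~\ref{cor:nonhom}), where the kernel decay $(1+|x|)^{-\gamma}$ is what permits $\beta=0$ on the left with $\alpha<0$ on the right. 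The condition $\p\ge\p_{G}$ is exactly $2\Lambda(\alpha,p,\p)\ge\Lambda(0,q,q)=0$ required there, and the floor $2n$ arises from the choice $q=\p_{G}/2\ge n$ needed to make $(r,q)$ a Serrin pair, not from a Hardy estimate at the origin; a final localization on parabolas $\Pi(R)$ with $R\to\infty$ removes the auxiliary hypothesis $\Lambda(\alpha_{0},p_{0},\p_{0})\ge0$ on the data.
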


\begin{remark} 

Let us point out again that the main information of the 
Theorem is contained in the assumptions 
(\ref{OurYZCondition3}, \ref{OurYZCondition3bis}), {\it i.e.} 
the angular integrability necessary in order to 
get a global regularity result.

It turns out by relations (\ref{pLpGalphaPositivo}, \ref{pLpGalphaNegativo}) 
that in the case of negative weights additional 
angular integrability ($\p_{G}>p$) is necessary in order 
to get global regularity. On the other hand if we 
consider $|x|^{\alpha}, \alpha > 0$ then the additional information at infinity 
allows to get global regularity even for weaker angular integrability ($\p_{G} < p$). 
\end{remark}

\begin{remark} Notice that:
\begin{itemize}
\item Our method misses the endpoint $s=+\infty$; 

\item If $n > 3$ we get a gain in the negative values 
of $\alpha$ with respect to Theorem \ref{YZTheoremSS}. 
We have in fact $\frac{1-n}{2} < \alpha$ instead 
of $-1 \leq \alpha$. This is also more satisfactory 
because exhibits a dependence on the dimension. 
We have, on the other hand, a loss in the 
positive values of $\alpha$, {\it i.e.} $\alpha < \frac{1}{2}$ instead 
of $\alpha < 1$.
\end{itemize}
\end{remark}

\begin{theorem}\label{OurYZTheoremLoc}
Let $n \geq 3$ and $u_{0}$ be a divergence free vector field with each  component in the Schwartz class. Let also $u$ be a weak solution of (\ref{CauchyNS})  satisfying (\ref{HPEquiv}). Then 
$(0,T) \times \{ 0\}$ is a regular set provided that
\begin{equation}\label{OurYZCondition0Loc}
\alpha \in [-1/2,0), \quad 
n < p \leq +\infty,
\end{equation} 
and
\begin{equation}\label{OurYZBound1Loc}
\| |x|^{\alpha} u \|_{L^{s}_{T}L^{p}_{|x|}L^{\p}_{\theta}} < +\infty,
\end{equation}
with
\begin{equation}\label{OurYZCondition1Loc}
\frac{2}{s}+ \frac{n}{p} = 1-\alpha,
\end{equation} 

\begin{equation}\label{OurYZCondition2Loc} 
\max \left( 2, \frac{2}{1-\alpha} \right) < s < +\infty,
\quad or \quad s = \frac{2}{1-\alpha}, 
\end{equation}

\begin{equation}\label{OurYZCondition3Loc}
\p \geq \p_{L} := \frac{2(n-1)p}{(2\alpha +1)p + 2(n-1)};
\end{equation}
or
\begin{equation}\label{OurYZCondition0bisLoc}
\alpha \in [0,1), \quad 
\frac{n}{1 - \alpha} < p \leq +\infty,   
\end{equation}
and 
\begin{equation}\label{OurYZBound1bisLoc}
\| |x|^{\alpha} u \|_{L^{s}_{T}L^{p}_{|x|}L^{\p}_{\theta}} < +\infty,
\end{equation}
with
\begin{equation}\label{OurYZCondition1bisLoc}
\frac{2}{s}+ \frac{n}{p} = 1-\alpha,
\end{equation} 

\begin{equation}\label{OurYZCondition2bisLoc} 
\frac{2}{1-\alpha} \leq s < +\infty, 
\end{equation}

\begin{equation}\label{OurYZCondition3bisLoc}
\p > \p_{L} := \frac{2(n-1)p}{p + 2(n-1)}.
\end{equation}
\end{theorem}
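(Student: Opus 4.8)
The plan is to run the scheme behind Theorem~\ref{YZTheoremSS} and Theorem~\ref{OurYZTheorem} in localized form: to establish a scaling--critical weighted energy bound concentrated at the origin and feed it into a local $\varepsilon$--regularity statement, the only new ingredient being a mixed radial--angular interpolation inequality for the nonlinear term that keeps track of the minimal admissible angular exponent. We may assume $u$ is a suitable weak solution of \eqref{CauchyNS} (arguing on a regularization and passing to the limit by lower semicontinuity of the norms); since $u_{0}$ is Schwartz, the \emph{harmonic} weight $\phi(x):=|x|^{2-n}$, for which $\Delta\phi=-c_{n}\delta_{0}$ with $c_{n}>0$, satisfies $\int_{\Rn}|u_{0}|^{2}\phi\,dx<+\infty$. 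First I would test the momentum equation against $\phi u$: using $\nabla\cdot u=0$ and the harmonicity of $\phi$ (which annihilates the commutator generated by the dissipative term, up to a contribution at $0$ of favourable sign that is discarded, or removed by truncating $\phi$ near the origin) one gets
$$
\frac{d}{dt}\int_{\Rn}|u|^{2}\phi\,dx+2\int_{\Rn}|\nabla u|^{2}\phi\,dx\ \le\ C\int_{\Rn}\frac{|u|^{3}}{|x|^{n-1}}\,dx+C\int_{\Rn}\frac{|P|\,|u|}{|x|^{n-1}}\,dx .
$$
The pressure is eliminated through $P=R_{i}R_{j}(u_{i}u_{j})$ (Riesz transforms) and the weighted Calder\'on--Zygmund estimate for the Muckenhoupt $A_{3/2}$ weight $|x|^{1-n}$, which bounds the last term again by $C\int_{\Rn}|u|^{3}|x|^{1-n}dx$; thus everything reduces to controlling the scale--invariant quantity $N(t):=\int_{\Rn}|u(x,t)|^{3}|x|^{1-n}\,dx$.

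Then I would estimate $N(t)$ by interpolation. The weighted quantities $\int|u|^{2}\phi\,dx$ and $\int|\nabla u|^{2}\phi\,dx$ control, via Hardy's inequality in the radial variable and a Sobolev embedding on $\mathbb{S}^{n-1}$ in the angular variable, a mixed norm of $u$ of the type \eqref{MixedNorms} with an improved angular exponent --- this is where the number $2(n-1)$ appearing in \eqref{OurYZCondition3Loc}--\eqref{OurYZCondition3bisLoc} comes from. Combining this with the standing hypothesis $\||x|^{\alpha}u\|_{L^{s}_{T}L^{p}_{|x|}L^{\p}_{\theta}}<+\infty$ through H\"older in the radial and in the angular variables, one reaches, after absorbing the dissipation on the left by Young's inequality (the point where the exponent produced by the interpolation must be $<1$), an inequality of the form
$$
\frac{d}{dt}\int_{\Rn}|u|^{2}\phi\,dx+\int_{\Rn}|\nabla u|^{2}\phi\,dx\ \le\ h(t)\int_{\Rn}|u|^{2}\phi\,dx ,\qquad h\in L^{1}(0,T).
$$
The membership $h\in L^{1}(0,T)$ is exactly where the scaling identities \eqref{OurYZCondition1Loc}/\eqref{OurYZCondition1bisLoc} and $s<+\infty$ enter, while the interpolation itself is admissible precisely under $\p\ge\p_{L}$ as in \eqref{OurYZCondition3Loc} in the first regime and under the strict $\p>\p_{L}$ of \eqref{OurYZCondition3bisLoc} in the second (the strict inequality being forced by an endpoint failure of the interpolation when $\alpha\ge0$); the two formulae for $\p_{L}$ reflect the two ways the weight $|x|^{\alpha}$ is split between the energy/dissipation factors and the hypothesis according to the sign of $\alpha$, consistently with \eqref{pLpGalphaPositivo}--\eqref{pLpGalphaNegativo}. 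The borderline value $s=2/(1-\alpha)$, i.e. $p=+\infty$ --- which is disconnected from the main range of $s$ when $\alpha<0$ --- would need a slightly different, more direct argument. Gronwall's lemma then gives
$$
\sup_{t\in[0,T]}\int_{\Rn}|u(x,t)|^{2}|x|^{2-n}\,dx+\int_{0}^{T}\!\!\int_{\Rn}|\nabla u|^{2}|x|^{2-n}\,dx\,dt\ <\ +\infty .
$$

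Finally I would convert this weighted bound into regularity at the origin. For every $\bar t\in(0,T)$, since $|x|\le r$ on $B_{r}$, one has $r^{-(n-2)}\int_{\bar t-r^{2}}^{\bar t}\!\int_{B_{r}}|\nabla u|^{2}\,dx\,dt\le\int_{\bar t-r^{2}}^{\bar t}\!\int_{B_{r}}|\nabla u|^{2}|x|^{2-n}\,dx\,dt\to0$ as $r\to0$, uniformly in $\bar t$ by absolute continuity of the integral, and likewise $r^{-(n-2)}\int_{B_{r}}|u(\bar t)|^{2}\,dx\to0$; a local $\varepsilon$--regularity argument (\cite{CKN} when $n=3$, and its higher--dimensional analogue when $n\ge4$) then makes every $(\bar t,0)$ a regular point, so that $(0,T)\times\{0\}$ is a regular set. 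The main obstacle is the interpolation step: pinning down the sharp weighted, mixed radial--angular Gagliardo--Nirenberg inequality that simultaneously closes the Gronwall argument and does so under the weakest possible angular integrability, so as to produce exactly $\p_{L}$, and carrying out separately within that step the two weight regimes $\alpha<0$ and $\alpha\ge0$ as well as the endpoint $s=2/(1-\alpha)$.
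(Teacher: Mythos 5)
Your route is genuinely different from the paper's, and unfortunately it is the route the paper explicitly declines to take. The paper never tests the equation against $\phi u$ with $\phi=|x|^{2-n}$; instead it works with the integral formulation (Theorem \ref{DiffvsInt}), bounds the linear and Duhamel terms by the mixed radial--angular kernel estimates of Propositions \ref{IDecayCor} and \ref{DDecayCor} (which rest on the Stein--Weiss inequality with angular integrability, Lemma \ref{cor:nonhom}), and thereby produces a bound on $\||x|^{\beta}u\|_{L^{r}_{T}L^{q}_{x}}$ with $\tfrac2r+\tfrac nq=1-\beta$ for a \emph{new} weight ($\beta=-1$ when $\alpha\in[-1/2,0)$, $\beta=2\alpha-1+\varepsilon$ when $\alpha\in[0,1)$), which is then fed into Zhou's Theorem \ref{YZTheoremSS} as a black box. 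In that scheme the exponent $\p_{L}$ falls out of the single algebraic condition $2\Lambda(\alpha,p,\p)\ge\Lambda(\beta,q,\q)$ of Proposition \ref{DDecayCor}, and the restriction $\alpha\ge-1/2$ comes from requiring $\beta\ge-1$ (so that Zhou's theorem applies) together with $\p_{L}\le p$.

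The gap in your proposal is that its central quantitative step is never carried out. Everything in the statement of the theorem is encoded in the exact value of $\p_{L}$, and your argument reduces to the claim that a ``sharp weighted, mixed radial--angular Gagliardo--Nirenberg inequality'' controls $N(t)=\int|u|^{3}|x|^{1-n}dx$ by the weighted energy, the weighted dissipation and the hypothesis norm with angular exponent exactly $\p_{L}$ --- but you state this as ``the main obstacle'' rather than proving it, and you give no exponent bookkeeping showing that the angular Sobolev embedding applied to $\int|\nabla u|^{2}|x|^{2-n}$ actually produces $\p_{L}=\frac{2(n-1)p}{(2\alpha+1)p+2(n-1)}$ rather than some larger exponent, nor that the resulting power of the dissipation is $<1$ so that Young's inequality closes the Gronwall loop exactly on the scaling lines \eqref{OurYZCondition1Loc}, \eqref{OurYZCondition1bisLoc}. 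This is precisely the ``more direct proof'' that the paper says it cannot yet give (it attributes the difficulty to the behaviour of the Riesz transform, i.e. of the pressure, in mixed radial--angular weighted spaces, and defers it to future work), so the burden of proof here is entirely on you and is not discharged. Two further problems: (i) replacing $u$ by a regularized suitable weak solution and ``passing to the limit'' does not prove regularity of the \emph{given} weak solution satisfying \eqref{HPEquiv} --- the limit is a different solution unless you also prove a weak--strong uniqueness statement in this class, whereas the integral-equation route applies to $u$ itself; (ii) the final $\varepsilon$--regularity step via \cite{CKN} is only available for $n=3$ and for suitable weak solutions, while the theorem is stated for all $n\ge3$. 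As written, the proposal is a plausible programme, not a proof.
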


\begin{remark} 
Notice that: 
\begin{itemize}
\item Our main assumption is actually weaker that (\ref{MainAssumptYZ}) 
because $\p_{L} < p$ ( $\p_{L} = p \ \mbox{if} \ \alpha = -1/2$);
 
\item We have a loss in the negative values of $\alpha$ with 
respect to \ref{YZTheoremSS}. We assume in fact $-\frac{1}{2} \leq \alpha$ 
instead of $-1 \leq \alpha$.
\end{itemize}
\end{remark}

It is interesting to compare this results with 
the regularity criteria obtained by working in parabolic
Morrey spaces \cite{Kuk1, Kuk2, Tay}. Consider the norms

\begin{equation*}
\| u \|_{L^{p}_{\lambda}((0,T) \times \Rn)} := \sup_{ \bar{t} \in (0,T), \bar{x} \in \Rn} \sup_{r>0} 
\frac{1}{r^{ \lambda / p}}  \| u \|_{L^{p}(Q_{r}(\bar{t}, \bar{x}))}, 
\end{equation*}
where $Q_{r}(\bar{t}, \bar{x})$ is the parabolic cylinder 
of radius $r$ and centered in $(\bar{t}, \bar{x})$  
\begin{equation*}
Q_{r}(\bar{t}, \bar{x}) := B_{r}(\bar{x}) \times (\bar{t} - r^{2}, \bar{t} + r^{2})
\end{equation*}
and focus on the formal corrispondence 

\begin{equation*}
\| u \|_{L^{p}_{\lambda}((0,T) \times \Rn)} \leftrightarrow 
\sup_{\bar{x} \in \Rn}\| |x- \bar{x}|^{ - \lambda / p} u \|_{L^{p}_{T}L^{p}_{x}}; 
\end{equation*}
Since $ \||x-\bar{x}|^{- \lambda / p} u \|_{L^{p}_{T}L^{p}_{x}} \geq 
  \sup_{\bar{t} \in (0,T)}  \sup_{r>0} 
\frac{1}{r^{ \lambda / p}}  \| u \|_{L^{p}(Q_{r}(\bar{x}, \bar{t})}$ it is 
clear that boundedness assumptions in weighted spaces 
are stronger then their counterpart in Morrey spaces. 
This is heuristically because in the first case
the weights provide a 
residual information even for large $|x|$. 
As we have observed this information and 
angular integrability hypotesis provide a  
a more satisfactory ragularity theory.

We exploit again this viewpoint through a really 
interesting example, {\it i.e.} the weighted counterpart 
of the following

\begin{theorem}[\cite{CKN}]\label{CKNTheorem}
Let $n=3$ and $u$ be a suitable weak solution of (\ref{CauchyNS}).
There is an absolute constant $\varepsilon$ such that if 
\begin{equation}\label{MorreyCKN}
\limsup_{r \rightarrow 0} 
\frac{1}{r^{2}}\int_{Q^{*}_{r}(\bar{t},0)} |u|^{3} +  |p|^{3/2}  \leq \varepsilon,  
\end{equation}  
where 
$$
Q^{*}_{r}(\bar{t},0)
:= \left\{ (\tau, y) : \quad |y| < r, \ 
\bar{t} - 7/8 r^{2} < \tau < \bar{t} + 1/8 r^{2} \right\};
$$
then 
$(\bar{t}, 0)$ is a regular point. 
\end{theorem}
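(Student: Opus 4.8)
\emph{Proof proposal.} The plan is to run the classical $\varepsilon$-regularity scheme of Caffarelli--Kohn--Nirenberg (in the streamlined form later due to Lin and to Ladyzhenskaya--Seregin). The structural input is the defining property of a \emph{suitable} weak solution: for every nonnegative $\phi\in C^{\infty}_{c}((0,T)\times\mathbb{R}^{3})$,
$$
\int_{\mathbb{R}^{3}}|u(t)|^{2}\phi\,dx+2\int\!\!\int|\nabla u|^{2}\phi\ \leq\ \int\!\!\int|u|^{2}\bigl(\partial_{\tau}\phi+\Delta\phi\bigr)+\int\!\!\int\bigl(|u|^{2}+2p\bigr)\,u\cdot\nabla\phi .
$$
First I would introduce the dimensionless functionals
$$
A(r)=\sup_{t}\tfrac1r\!\int_{B_{r}}\!|u(t)|^{2},\quad E(r)=\tfrac1r\!\int\!\!\int_{Q^{*}_{r}}\!|\nabla u|^{2},\quad C(r)=\tfrac1{r^{2}}\!\int\!\!\int_{Q^{*}_{r}}\!|u|^{3},\quad D(r)=\tfrac1{r^{2}}\!\int\!\!\int_{Q^{*}_{r}}\!|p|^{3/2},
$$
with the $\sup$ in $A(r)$ taken over the time-slab of $Q^{*}_{r}$; each of these is invariant under the natural scaling $u\mapsto\lambda u(\lambda^{2}t,\lambda x)$, $p\mapsto\lambda^{2}p(\lambda^{2}t,\lambda x)$ of (\ref{CauchyNS}), and hypothesis (\ref{MorreyCKN}) says precisely that $\limsup_{r\to0}\bigl(C(r)+D(r)\bigr)\leq\varepsilon$.

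The core of the argument is a decay inequality across scales. Testing the local energy inequality against a cutoff subordinate to the inclusion $Q^{*}_{\theta r}\subset Q^{*}_{r}$ bounds $A(\theta r)+E(\theta r)$ by a combination of $C(r)$ and $D(r)$; substituting this into a Gagliardo--Nirenberg interpolation of $\|u\|_{L^{3}}$ between $\|u\|_{L^{2}}$ and $\|\nabla u\|_{L^{2}}$, together with the pressure estimate below, closes a system of inequalities which, under the smallness hypothesis, forces $C(\theta r)+D(\theta r)$ to be a fixed fraction of $C(r)+D(r)$ up to a negligible remainder, once $\theta$ is taken small enough and then $r$ small enough. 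Iterating gives geometric decay $C(\theta^{k}r_{0})+D(\theta^{k}r_{0})\to0$; interpolating between consecutive scales promotes this to a Morrey-type bound $C(r)+D(r)\lesssim r^{\mu}$ for some $\mu>0$ and all $r\leq r_{0}$, uniformly for cylinders centred in a fixed neighbourhood of $(\bar t,0)$.

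From this subcritical parabolic Morrey bound the passage to boundedness is standard: writing the momentum equation as the forced heat equation $\partial_{t}u-\Delta u=-\mathbb{P}\,\nabla\cdot(u\otimes u)$ and iterating the smoothing of the heat semigroup on these Morrey scales --- equivalently, deducing from it a local Serrin-type bound $u\in L^{s}_{t}L^{q}_{x}$ with $2/s+3/q<1$ near $(\bar t,0)$ --- upgrades $u$ to $L^{\infty}$ on a neighbourhood of $(\bar t,0)$. By the definition of a regular point (and the remark there that local boundedness forces smoothness), $(\bar t,0)$ is then regular.

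The hard part, as always, will be the pressure. Since $-\Delta p=\partial_{i}\partial_{j}(u_{i}u_{j})$ the pressure is nonlocal, so $D(\theta r)$ is \emph{not} controlled by $D(r)$ alone. I would handle this by the usual local/harmonic splitting on balls: for a.e.\ $t$ write $p=p_{\mathrm{loc}}+p_{\mathrm{har}}$ on $B_{r}$, where $-\Delta p_{\mathrm{loc}}=\partial_{i}\partial_{j}\bigl((u_{i}u_{j})\one{B_{r}}\bigr)$ in $\mathbb{R}^{3}$, so that $\|p_{\mathrm{loc}}(t)\|_{L^{3/2}(B_{r})}\lesssim\|u(t)\|^{2}_{L^{3}(B_{r})}$ by the Calder\'{o}n--Zygmund inequality, while $p_{\mathrm{har}}=p-p_{\mathrm{loc}}$ is harmonic in $B_{r}$, whence $\|p_{\mathrm{har}}(t)\|_{L^{3/2}(B_{\rho})}\lesssim(\rho/r)^{2}\,\|p_{\mathrm{har}}(t)\|_{L^{3/2}(B_{r})}$ for $\rho\leq r/2$ by the interior estimate for harmonic functions. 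Integrating in $t$ over the (asymmetric) time-slab of $Q^{*}_{\rho}$ --- the asymmetry merely reflecting the one-sided, backward-in-time character of the energy inequality and causing no genuine difficulty --- and using $\|p_{\mathrm{har}}(t)\|_{L^{3/2}(B_{r})}\leq\|p(t)\|_{L^{3/2}(B_{r})}+\|p_{\mathrm{loc}}(t)\|_{L^{3/2}(B_{r})}$ produces the companion estimate for $D(\theta r)$ used above. Making the scheme quantitative --- keeping track of the absolute constant $\varepsilon$ through the choices of $\theta$ and of the iteration --- is the delicate but essentially routine remaining task.
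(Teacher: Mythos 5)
This theorem is not proved in the paper at all: it is quoted verbatim from \cite{CKN} as a known black box, used only to motivate the comparison between the Morrey-type smallness condition (\ref{MorreyCKN}) and the weighted condition (\ref{OurMorreyCKN}). So there is no proof of the author's to compare yours against; what you have written is a sketch of the classical Caffarelli--Kohn--Nirenberg $\varepsilon$-regularity argument itself. Judged on its own terms, your outline is the correct and standard one: the scale-invariant functionals $A,E,C,D$, the local energy inequality as the defining property of a suitable weak solution, the Gagliardo--Nirenberg interpolation, the local/harmonic splitting of the pressure with the Calder\'on--Zygmund bound on the local part and the interior estimate on the harmonic part, and the final bootstrap from a subcritical Morrey bound to local boundedness. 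You also correctly observe that the $\limsup$ form of (\ref{MorreyCKN}) reduces the statement to the one-scale smallness criterion.

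Two caveats, both at steps you explicitly defer. First, the closing of the system of inequalities is the heart of the proof and is not quite as you describe: naively substituting the energy bound $A(r/2)+E(r/2)\lesssim C(r)^{2/3}+C(r)+C(r)^{1/3}D(r)^{2/3}$ into the interpolation $C(\theta r)\lesssim \theta^{3}A^{3/2}+\theta^{-3}A^{3/4}E^{3/4}$ does \emph{not} yield $C(\theta r)+D(\theta r)\leq \tfrac12\bigl(C(r)+D(r)\bigr)$, because the bad term carries the factor $\theta^{-3}$ and the gain from smallness of $\varepsilon$ enters with the wrong exponent. The actual proofs either iterate a more carefully weighted combination of the functionals over dyadic scales (CKN), or replace the iteration by a compactness/blow-up argument comparing the rescaled solution to a caloric function (Lin, Ladyzhenskaya--Seregin). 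Second, the passage from the decay $C(r)+D(r)\lesssim r^{\mu}$ to $L^{\infty}$ near $(\bar t,0)$ requires a genuine additional lemma (a parabolic Campanato/De Giorgi-type estimate, or the second CKN lemma via $\limsup_{r\to0}\tfrac1r\int_{Q_r}|\nabla u|^{2}$), not merely heat-semigroup smoothing. Neither point invalidates your architecture, but both are where the absolute constant $\varepsilon$ is actually earned, and a complete proof would have to supply them.
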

We focus on the condition
\begin{equation}\label{OurMorreyCKN}
\| |x|^{-2/3} u \|_{L^{3}_{T}L^{3}_{|x|}L^{3}_{\theta}} < \infty.  
\end{equation}
A little work is necessary in order to show that (\ref{OurMorreyCKN}) 
is actually stronger than (\ref{MorreyCKN}). We just sketch the argument that is 
classical in the context of the Navier--Stokes theory. 
At first notice that
the pressure can be recovered by $u$ through\footnote{See also the next Section.}
$$
P = \sum_{i,j=1}^{3}R_{i}R_{j}u_{i}u_{j},
$$
where $R_{i}$ is the Riesz transform in the $i$-th direction. 
So the second term in (\ref{MorreyCKN}) can be bounded 
by using the Calderon-Zygmund inequality (see \cite{Stein})
\begin{equation*}
\| |x|^{\delta} P \|_{L^{r}(\mathbb{R}^{3})} \leq
C  \| |x|^{\delta} |u|^{2}\|_{L^{r}(\mathbb{R}^{3})},  
\end{equation*}
$$ 
r \in (1,\infty), \quad -\frac{3}{r} < \delta < 1 - \frac{3}{r},
$$
with the choice $(\delta,r) = (-4/3, 3/2)$. 
Then the smallness assumption in (\ref{MorreyCKN}) easily follows by (\ref{OurMorreyCKN})
provided that $T > \bar{t}$:
\begin{eqnarray}
 \limsup_{r\rightarrow 0} \frac{1}{r^{2}}\int_{Q^{*}_{r}(\bar{t},0)} 
|u|^{3}
&=& \limsup_{r\rightarrow 0} \frac{1}{r^{2}}
\int_{\bar{t} - 7/8 r^{2}}^{\bar{t} + 1/8 r^{2}}\int_{B(0,r)} 
|u|^{3} \nonumber   \\
& \leq & \limsup_{r\rightarrow 0} 
\int_{\bar{t} - 7/8 r^{2}}^{\bar{t} + 1/8 r^{2}}\int_{B(0,r)} 
|x|^{-2}|u|^{3} \nonumber   \\
& \leq & \limsup_{r\rightarrow 0}  
\int_{\bar{t} - 7/8 r^{2}}^{\bar{t}+ 1/8 r^{2}}\int_{\Rn} 
|x|^{-2}|u|^{3} =0. \nonumber 
\end{eqnarray}
Then also notice that $(\alpha, p, s)= (-2/3, 3, 3)$ is an admissible 
choice of indexes in Theorem \ref{YZTheoremSS}.

Theorems \ref{OurYZTheorem}, 
\ref{OurYZTheoremLoc} suggest
that it is possible to 

\begin{enumerate}
    \item get global regularity (in $(0,T) \times \Rn$) 
    by a suitable amount of angular integrability in (\ref{OurMorreyCKN});  \\
    \item get regularity in $(0,T) \times \{ 0 \}$ even by
    weaker angular integrability in (\ref{OurMorreyCKN}). 
\end{enumerate}  
The first point is achieved by applying Theorem \ref{OurYZTheorem} with 
$$
(\alpha,s, p, \p) = (-2/3, 3, 3, +\infty),
$$ 
{\it i.e.} by assuming 
\begin{equation*}
\| |x|^{-2/3} u \|_{L^{3}_{T}L^{3}_{|x|}L^{\infty}_{\theta}} < +\infty ;
\end{equation*}
notice that in this case the indexes satisfy the endpoint 
relation $p=\frac{1-n}{\alpha}$ so we have to require $L^{\infty}$ 
boundedness in the angular direction.

Otherwise it is interesting to notice that Theorem \ref{OurYZTheoremLoc} 
can not give a positive answer to the second point because the value 
$\alpha = - 2/3$ is not permitted. This is of course due to our method and 
in particular to the fact that we never work directly with the energy estimate 
as in \cite{YongZhou}. Otherwise a more direct proof of Theorem \ref{OurYZTheoremLoc} 
requires a delicate analysis of the properies of the Riesz transform 
in mixed radial-angular spaces. This is also a topic of independent interest 
and we hope to reexamine it in a future work.      

The rest of the paper is organized as follows: in the second Section we recall the well known integral formulation of (\ref{CauchyNS});
in the third Section we prove time decay estimates for the heat and Oseen kernels in weighted $L^{p}_{|x|}L^{\p}_{\theta}$ spaces; 
in the fourth Section we prove the main Theorems.

\section{Integral formulation of the problem}

We recall the integral formulation of the Navier--Stokes problem. 
By taking the divergence of the first equation in (\ref{CauchyNS})
and by using the incompressibility:

\begin{eqnarray}\label{PressureRie}
- \Delta P &=&  \sum_{i=1}^{n} \partial_{i} \sum_{j=1}^{n}u_{j}\partial_{j}u_{i}  \\ 
&=& \sum_{i,j=1}^{n}\partial_{i}\partial_{j}(u_{i}u_{j}), 
\end{eqnarray}
so $P$ can be, at least formally, recovered by $u$ through
\begin{equation}
\textstyle
P = - \Delta^{-1} \sum_{i,j=1}^{n}\partial_{i}\partial_{j}(u_{i}u_{j}).
\end{equation} 
Thus (\ref{CauchyNS}) becomes
\begin{equation}\label{CauchyNSInt}
\left \{
\begin{array}{rclcl}
 u & = & e^{t\Delta}u_{0} - \int_{0}^{t}e^{(t-s)\Delta}\mathbb{P} \nabla \cdot (u \otimes u) ds &\quad \mbox{in} \quad & [0,T)\times \mathbb{R}^{n}  \\
\nabla \cdot u & = & 0 &\quad \mbox{in} \quad & [0,T)\times \mathbb{R}^{n}, 
\end{array}\right.
\end{equation}
where $(u\otimes u)_{i,j} := u_{i}u_{j}$ and $\mathbb{P}$ is formally defined by
\begin{equation}\label{LerProj}
\mathbb{P} f :=  f - \nabla \Delta^{-1} (\nabla \cdot f).
\end{equation}
This operator is a really useful tool in 
the study of the Navier--Stokes problem. 
It is actually a projection on the subspace 
of the divergence free vector fields 
($\mathbb{P}f=f \Leftrightarrow \nabla \cdot f =0$).
If $f \in [L^{2}(\mathbb{R}^{n})]^{n}$ then $\mathbb{P}$
is rigorously defined by  
$$
\mathbb{P}f := f  +  R \otimes R \ f,
$$
where $R$ is the vector of 
the Riesz transformations. 
On the other hand $\mathbb{P}$ can be defined on larger 
Banach spaces as a Calderon-Zygmund operator. 
Furthermore we are basically interested in the operator $\mathbb{P}(\nabla \cdot \ \ )$
that, thanks to the differentiation, can be 
actually defined on 
$[L^{1}_{uloc}(\Rn)]^{n \times n}$, 
{\it i.e.} the space of uniformly locally integrable functions 
(see \cite{Lem} for further details).

Now we focus on some properties of the Oseen kernel. At first we need the following
\begin{lemma}[\cite{Lem}]\label{OseenKernelTheorem}
Let $1\leq i,j \leq n$. The operator 
$\Delta^{-1} \sum_{j=1}^{n}\partial_{i}\partial_{j}e^{t\Delta}$ 
is a convolution operator $\sum_{j=1}^{n}O_{i,j}*f_{j}$ with
$$
O_{i,j}(t,x) := \frac{1}{t^{\frac{n}{2}}}o_{i,j}\left( \frac{x}{\sqrt{t}} \right)
$$
and for each multi-index $\eta$
$$
o_{i,j} \in C^{\infty}(\Rn), \qquad
(1+|x|)^{n+|\eta|}\partial^{\eta}o_{i,j} \in L^{\infty}(\Rn).
$$

\end{lemma}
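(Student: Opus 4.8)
The plan is to pass to the Fourier side and identify the symbol of the operator. Since $\widehat{e^{t\Delta}f}(\xi)=e^{-t|\xi|^{2}}\widehat f(\xi)$, $\widehat{\partial_i\partial_j g}(\xi)=-\xi_i\xi_j\widehat g(\xi)$ and $\widehat{\Delta^{-1}h}(\xi)=-|\xi|^{-2}\widehat h(\xi)$, composing these three operations shows that for a vector field $f=(f_1,\dots,f_n)$ one has
\[
\Delta^{-1}\sum_{j=1}^{n}\partial_i\partial_j e^{t\Delta}f \;=\; \sum_{j=1}^{n} O_{i,j}(t,\cdot)*f_j,
\qquad
\widehat{O_{i,j}}(t,\xi)=m_t(\xi):=\frac{\xi_i\xi_j}{|\xi|^{2}}\,e^{-t|\xi|^{2}}.
\]
The elementary but crucial observation is the exact scaling identity $m_t(\xi)=m_1(\sqrt t\,\xi)$, which upon taking the inverse Fourier transform gives $O_{i,j}(t,x)=t^{-n/2}o_{i,j}(x/\sqrt t)$ with $o_{i,j}:=\mathcal F^{-1}m_1$. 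This already produces the asserted self-similar form, so everything reduces to showing that $o_{i,j}=\mathcal F^{-1}\bigl[\xi_i\xi_j|\xi|^{-2}e^{-|\xi|^{2}}\bigr]$ is smooth and satisfies $(1+|x|)^{n+|\eta|}\partial^\eta o_{i,j}\in L^{\infty}(\Rn)$ for every multi-index $\eta$.

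Next I would dispose of the smoothness and of the estimate on $|x|\le1$ simultaneously. The symbol $m_1$ is $C^\infty$ away from the origin with Gaussian decay, and since $\xi_i\xi_j|\xi|^{-2}$ is homogeneous of degree $0$, for each $\eta$ the function $(i\xi)^\eta m_1(\xi)$ is bounded near $\xi=0$ (it vanishes like $|\xi|^{|\eta|}$) and is Gaussian at infinity, hence lies in $L^{1}(\Rn)$. Consequently $\partial^\eta o_{i,j}=\mathcal F^{-1}[(i\xi)^\eta m_1]$ is a bounded continuous function for every $\eta$, so $o_{i,j}\in C^\infty(\Rn)$ and $(1+|x|)^{n+|\eta|}\partial^\eta o_{i,j}$ is bounded on the unit ball.

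The technical heart is the sharp decay as $|x|\to\infty$, and here I would use a Littlewood--Paley decomposition in frequency, $m_1=\sum_{k\ge0}m_1\,\psi(2^{k}\cdot)+r$, with $r$ supported away from the origin, hence Schwartz and contributing a rapidly decaying term. Writing $m_k:=m_1\,\psi(2^{k}\cdot)$, supported where $|\xi|\sim2^{-k}$, the degree-$0$ homogeneity of $\xi_i\xi_j|\xi|^{-2}$ together with the localization gives $\bigl|\partial_\xi^\gamma[(i\xi)^\eta m_k](\xi)\bigr|\lesssim 2^{-k|\eta|}2^{k|\gamma|}$ on the support, whence $\bigl\|\Delta_\xi^{N}[(i\xi)^\eta m_k]\bigr\|_{L^{1}}\lesssim 2^{k(2N-n-|\eta|)}$; integrating by parts $N$ times against $e^{ix\cdot\xi}$ yields
\[
\bigl|\partial^\eta\mathcal F^{-1}m_k(x)\bigr|\;\lesssim\; 2^{-k(n+|\eta|)}\,\min\!\bigl(1,\,(2^{k}/|x|)^{2N}\bigr),\qquad N\ge0 .
\]
Summing over $k\ge0$, splitting the sum at $2^{k}\sim|x|$ and choosing $N$ with $2N>n+|\eta|$, both resulting geometric series contribute $\lesssim|x|^{-(n+|\eta|)}$; adding the Schwartz remainder $r$ and combining with the bound near the origin gives $(1+|x|)^{n+|\eta|}\partial^\eta o_{i,j}\in L^{\infty}(\Rn)$. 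I expect this last step to be the main obstacle: the exact exponent $n+|\eta|$ is precisely what forces the dyadic bookkeeping, since a naive $\Delta_\xi^{N}$ integration by parts over all of frequency space fails once $2N\ge n$ because $\Delta_\xi^{N}m_1$ ceases to be integrable near $\xi=0$. (Alternatively one may simply invoke the computation in \cite{Lem}, but the argument sketched above is self-contained.)
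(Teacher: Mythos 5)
Your argument is correct and complete. Note, however, that the paper itself offers no proof of this lemma: it is imported verbatim from \cite{Lem}, so there is no "paper route" to compare against line by line. Your self-contained Fourier-side proof is sound: the identification of the multiplier $\xi_i\xi_j|\xi|^{-2}e^{-t|\xi|^2}$ and the exact scaling $m_t(\xi)=m_1(\sqrt t\,\xi)$ give the self-similar form at once; the $L^1$ bound on $(i\xi)^\eta m_1$ (using that $\xi_i\xi_j|\xi|^{-2}$ is bounded and homogeneous of degree $0$) settles smoothness and the estimate on $|x|\le 1$; and the dyadic decomposition with the bound $\|\Delta_\xi^N[(i\xi)^\eta m_k]\|_{L^1}\lesssim 2^{k(2N-n-|\eta|)}$, summed with the split at $2^k\sim|x|$, does yield the sharp exponent $n+|\eta|$ — your diagnosis that a global integration by parts fails for $2N\ge n$ because of the non-integrable singularity of $\Delta_\xi^N m_1$ at the origin is exactly the right reason the localization is needed. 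The route taken in \cite{Lem} is instead a physical-space one: writing $O_{i,j}(t,\cdot)=\partial_i\partial_j\bigl(E_n*W_t\bigr)$ with $E_n$ the fundamental solution of the Laplacian and $W_t$ the heat kernel, computing $E_n*W_t$ for the radial Gaussian, and reading off that $\partial^\eta O_{i,j}$ matches $\partial^\eta\partial_i\partial_j E_n=O(|x|^{-n-|\eta|})$ up to rapidly decaying corrections. The two approaches deliver the same conclusion; yours has the advantage of applying verbatim to any symbol of the form (degree-$0$ homogeneous)$\times$(Gaussian), e.g.\ the full Leray projector kernel in Proposition~\ref{OseenDecayFinale}, while the explicit computation makes the leading-order asymptotics of $o_{i,j}$ at infinity visible. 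Two cosmetic points only: the phrase ``$r$ supported away from the origin, hence Schwartz'' should be justified by adding that $m_1$ is smooth with Gaussian decay on that support; and it is worth stating explicitly that the $N=0$ case of your dyadic bound supplies the ``$\min(1,\cdot)$'' alternative used for the tail $2^k>|x|$.
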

This is the main technical tool necessary in order to study the properties of 
$ e^{t\Delta}\mathbb{P}(\nabla  \cdot \ \ ) $,
it holds in fact the following
\begin{proposition}[\cite{Lem}]\label{OseenDecayFinale}
Let $1\leq i,j,k \leq n$. The operator $e^{t\Delta} \mathbb{P} (\nabla  \cdot \ \ )$ is a convolution operator $\sum_{j,k=1}^{n}K_{i,j,k}(t)*f_{j,k}$ with
$$
K_{i,j,k}(t,x) := \frac{1}{t^{\frac{n+1}{2}}}k_{i,j,k}\left( \frac{x}{\sqrt{t}} \right)
$$
and for each multi-index $\eta$
$$
k_{i,j,k} \in C^{\infty}(\Rn), \qquad
(1+|x|)^{n+ 1 +|\eta|}\partial^{\eta}k_{i,j,k} \in L^{\infty}(\Rn).
$$

\end{proposition}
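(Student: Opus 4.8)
The plan is to deduce Proposition~\ref{OseenDecayFinale} directly from Lemma~\ref{OseenKernelTheorem} together with the self-similar form of the Gaussian heat kernel; no new hard estimate is needed. First I would unfold $e^{t\Delta}\mathbb{P}(\nabla\cdot\ )$ using (\ref{LerProj}). For a matrix field $f=(f_{j,k})$ one has, componentwise,
\begin{equation*}
\bigl(\mathbb{P}(\nabla\cdot f)\bigr)_{i} = \sum_{k=1}^{n}\partial_{k}f_{i,k} - \partial_{i}\Delta^{-1}\sum_{j,k=1}^{n}\partial_{j}\partial_{k}f_{j,k},
\end{equation*}
and hence, since $e^{t\Delta}$ commutes with all the derivatives and with $\Delta^{-1}$,
\begin{equation*}
\bigl(e^{t\Delta}\mathbb{P}(\nabla\cdot f)\bigr)_{i} = \sum_{j,k=1}^{n}\Bigl(\delta_{ij}\,\partial_{k}e^{t\Delta} - \partial_{k}\Delta^{-1}\partial_{i}\partial_{j}e^{t\Delta}\Bigr)f_{j,k}.
\end{equation*}
The first term is convolution against $\partial_{k}G_{t}$, where $G_{t}(x)=(4\pi t)^{-n/2}e^{-|x|^{2}/4t}$ is the heat kernel; by Lemma~\ref{OseenKernelTheorem} the operator $\Delta^{-1}\partial_{i}\partial_{j}e^{t\Delta}$ is convolution against $O_{i,j}(t,\cdot)$, so the second term is convolution against $\partial_{k}O_{i,j}(t,\cdot)$. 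Thus $e^{t\Delta}\mathbb{P}(\nabla\cdot\ )$ is the convolution operator $\sum_{j,k}K_{i,j,k}(t)*f_{j,k}$ with $K_{i,j,k}(t,x):=\delta_{ij}\,\partial_{k}G_{t}(x)-\partial_{k}O_{i,j}(t,x)$.

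Next I would read off the self-similar structure. Writing $G_{t}(x)=t^{-n/2}G(x/\sqrt t)$ with $G$ the normalized Gaussian, and using $O_{i,j}(t,x)=t^{-n/2}o_{i,j}(x/\sqrt t)$ from Lemma~\ref{OseenKernelTheorem}, the chain rule yields an extra factor $t^{-1/2}$ after one spatial differentiation, so both summands have the form $t^{-(n+1)/2}(\,\cdot\,)(x/\sqrt t)$. Therefore $K_{i,j,k}(t,x)=t^{-(n+1)/2}k_{i,j,k}(x/\sqrt t)$ with
\begin{equation*}
k_{i,j,k}(y):=\delta_{ij}\,(\partial_{k}G)(y)-(\partial_{k}o_{i,j})(y),
\end{equation*}
which is smooth because $G$ and $o_{i,j}$ are.

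Finally I would verify the weighted bound. For any multi-index $\eta$, $\partial^{\eta}k_{i,j,k}=\delta_{ij}\,\partial^{\eta+e_{k}}G-\partial^{\eta+e_{k}}o_{i,j}$; the Gaussian piece is Schwartz, hence dominated by any polynomial weight, and applying the decay estimate of Lemma~\ref{OseenKernelTheorem} with the multi-index $\eta+e_{k}$ (of order $|\eta|+1$) gives $(1+|x|)^{n+|\eta|+1}\partial^{\eta+e_{k}}o_{i,j}\in L^{\infty}(\Rn)$. Combining, $(1+|x|)^{n+1+|\eta|}\partial^{\eta}k_{i,j,k}\in L^{\infty}(\Rn)$, as claimed.

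I do not expect a genuine obstacle: the argument is bookkeeping, the one point to keep straight being that each spatial derivative simultaneously shifts the similarity exponent from $n/2$ to $(n+1)/2$ and improves the polynomial decay of the profile by exactly one power --- precisely the difference between Lemma~\ref{OseenKernelTheorem} and Proposition~\ref{OseenDecayFinale}. The only genuinely delicate point, namely making rigorous sense of $\mathbb{P}(\nabla\cdot\ )$ on $[L^{1}_{uloc}(\Rn)]^{n\times n}$ and the pointwise control of the Oseen kernel, is already encapsulated in Lemma~\ref{OseenKernelTheorem} (and \cite{Lem}).
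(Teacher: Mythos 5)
Your proof is correct. Note that the paper itself gives no proof of this proposition --- it is quoted from \cite{Lem}, where the argument is exactly the one you give: write $\mathbb{P}(\nabla\cdot\ )$ out via (\ref{LerProj}), identify the kernel as $\delta_{ij}\partial_k G_t - \partial_k O_{i,j}(t,\cdot)$, and observe that the single extra spatial derivative simultaneously shifts the self-similar exponent from $n/2$ to $(n+1)/2$ and upgrades the decay of Lemma~\ref{OseenKernelTheorem} by one power. Your bookkeeping (applying the lemma with multi-index $\eta+e_k$, and absorbing the Gaussian piece as a Schwartz function) is exactly right, and you correctly flag that the only nontrivial input is the pointwise bound on $o_{i,j}$ already contained in Lemma~\ref{OseenKernelTheorem}.
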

We conclude the section with an useful equivalence result:
\begin{theorem}[\cite{Lem}]\label{DiffvsInt}
Let 
\begin{equation}\label{HPEquiv}
u \in \cap_{s<T} \left( L^{2}_{t}L^{2}_{uloc,x}\left( (0,s) \times \Rn \right) \right).
\end{equation}
The following are equivalent:
\begin{enumerate}
\item $(u,P)$ is a weak solution of
\begin{equation}
\left \{
\begin{array}{rcccl}
\partial_{t}u + (u \cdot \nabla) u - \Delta u & = & - \nabla P & \quad \mbox{in}& \quad (0,T)\times \mathbb{R}^{n}  \\
\nabla \cdot u & = & 0 & \quad \mbox{in} & \quad [0,T)\times \mathbb{R}^{n} \\
u & = & u_{0} & \quad \mbox{in}& \quad \{0\} \times \mathbb{R}^{n}, 
\end{array}\right.
\end{equation}
\item $(u,P)$ is a solution of the integral problem
\begin{equation}
\left \{
\begin{array}{rclcl}
 u & = & e^{t\Delta}u_{0} - \int_{0}^{t}e^{(t-s)\Delta}\mathbb{P} \nabla \cdot 
 (u \otimes u) \ ds &\quad \mbox{in} \quad & [0,T)\times \mathbb{R}^{n}  \\
\nabla \cdot u & = & 0 &\quad \mbox{in} \quad & [0,T)\times \mathbb{R}^{n} \\
P & = &  R \otimes R \ (u \otimes u) & \quad \mbox{in} \quad & [0,T)\times \mathbb{R}^{n}. 
\end{array}\right.
\end{equation}
\end{enumerate}
\end{theorem}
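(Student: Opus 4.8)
The plan is to prove the two implications separately; the bridge between them is the algebraic identity
\begin{equation*}
-\nabla P-(u\cdot\nabla)u=-\mathbb{P}\nabla\cdot(u\otimes u),\qquad P=R\otimes R\,(u\otimes u),
\end{equation*}
which follows from the incompressibility $\nabla\cdot u=0$ (so $(u\cdot\nabla)u=\nabla\cdot(u\otimes u)$) together with the definition (\ref{LerProj}): since $\nabla\cdot\nabla\cdot(u\otimes u)=\sum_{i,j}\partial_i\partial_j(u_iu_j)=-\Delta P$ one gets $\nabla\Delta^{-1}(\nabla\cdot\nabla\cdot(u\otimes u))=-\nabla P$, i.e. $(I-\mathbb{P})\nabla\cdot(u\otimes u)=-\nabla P$, and subtracting $\nabla\cdot(u\otimes u)$ gives the claim. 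In both directions the hypothesis (\ref{HPEquiv}) is used only through the fact that it forces $u\otimes u\in L^1_tL^1_{uloc,x}((0,s)\times\Rn)$ for every $s<T$, which is precisely the integrability the kernel convolutions below require.

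For $(1)\Rightarrow(2)$ I would first justify the pressure formula $P=R\otimes R(u\otimes u)$: taking the divergence of the momentum equation in $\mathcal{D}'$ yields $-\Delta P=\sum_{i,j}\partial_i\partial_j(u_iu_j)$, whose Newtonian-potential solution is $R\otimes R(u\otimes u)$; this differs from $P$ by a harmonic distribution, and the uniform local integrability of both forces it to be a polynomial, absorbed by the usual normalisation of the pressure, so one may take $P$ exactly as in (2). Via the bridge identity the momentum equation becomes the forced heat equation $\partial_tu-\Delta u=-\mathbb{P}\nabla\cdot(u\otimes u)$, with right-hand side controlled by Proposition \ref{OseenDecayFinale}. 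I would then mollify in time, apply Duhamel's formula for $e^{t\Delta}$ on $L^2_{uloc}$ (a bounded, strongly continuous semigroup there) and pass to the limit to get $u=e^{t\Delta}u_0-\int_0^te^{(t-s)\Delta}\mathbb{P}\nabla\cdot(u\otimes u)\,ds$; the initial condition follows from strong continuity of $e^{t\Delta}u_0$ at $t=0$ and from the vanishing of the Duhamel term as $t\to0^+$ proved below.

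For $(2)\Rightarrow(1)$ I would first check the Duhamel term is well defined: by Proposition \ref{OseenDecayFinale} the operator $e^{(t-s)\Delta}\mathbb{P}(\nabla\cdot\ )$ is convolution with $K_{i,j,k}(t-s,\cdot)$, and the bound $(1+|x|)^{n+1}k_{i,j,k}\in L^\infty$ gives $\|K_{i,j,k}(t-s)\|_{L^1_{uloc}(\Rn)}\lesssim(t-s)^{-1/2}$; together with $u\otimes u\in L^1_tL^1_{uloc,x}$ and the integrability of $(t-s)^{-1/2}$ near $s=t$ this makes $u$ lie in $\cap_{s<T}L^2_tL^2_{uloc,x}$ and shows $\int_0^te^{(t-s)\Delta}\mathbb{P}\nabla\cdot(u\otimes u)\,ds\to0$ in $L^2_{uloc}$ as $t\to0^+$. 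Applying $\partial_t-\Delta$ to the integral identity annihilates $e^{t\Delta}u_0$ and returns $-\mathbb{P}\nabla\cdot(u\otimes u)$ from the Duhamel term (justified by the kernel bounds and density of test functions), while applying $\nabla\cdot$ gives $\nabla\cdot u=0$, since $\nabla\cdot e^{t\Delta}u_0=e^{t\Delta}\nabla\cdot u_0=0$ and $\mathbb{P}$ maps into divergence-free fields. Rewriting $-\mathbb{P}\nabla\cdot(u\otimes u)$ through the bridge identity, with $P:=R\otimes R(u\otimes u)$ and $\nabla\cdot(u\otimes u)=(u\cdot\nabla)u$ by the incompressibility just obtained, recovers the momentum equation.

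The main obstacle is making Duhamel's formula rigorous in the $L^2_{uloc}$ setting: the forcing is only $L^1$ in time and merely uniformly-locally integrable in space, so Young's inequality on $\Rn$ is unavailable and one must instead exploit the pointwise decay of the heat and Oseen kernels supplied by Lemma \ref{OseenKernelTheorem} and Proposition \ref{OseenDecayFinale} to localise every convolution. A second, more classical, delicate point is pinning down the harmonic part of the pressure, where the uniform local integrability of $u\otimes u$ is exactly what is needed.
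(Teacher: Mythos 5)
The paper offers no proof of this theorem: it is imported verbatim from Lemari\'e--Rieusset \cite{Lem}, so your proposal can only be measured against that reference. Your overall architecture --- the bridge identity $-\nabla P-(u\cdot\nabla)u=-\mathbb{P}\nabla\cdot(u\otimes u)$ with $P=R\otimes R\,(u\otimes u)$, Duhamel's formula in the uniformly local setting, and the kernel decay of Proposition \ref{OseenDecayFinale} to make every convolution converge --- is the right one and is essentially the standard route.

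There is, however, a genuine gap in the direction $(1)\Rightarrow(2)$, exactly at the point you flag as ``classical but delicate''. In formulation (1) the hypothesis (\ref{HPEquiv}) constrains only $u$; nothing is assumed about $P$ beyond the distributional equation, which determines $P$ only up to a distribution harmonic in $x$ for each $t$. Your argument that ``the uniform local integrability of both forces it to be a polynomial, absorbed by the usual normalisation of the pressure'' is therefore not available: local integrability of $P$ is not a hypothesis, and even granting that the harmonic part is a polynomial, absorbing a non-constant harmonic polynomial into $P$ changes $\nabla P$ and hence changes $u$ --- it cannot be normalised away without altering the solution. Indeed, replacing $P$ by $P+\ell(t)\cdot x$ and $u$ by $u-\int_0^t\ell(s)\,ds$ produces another pair satisfying (1) (the added drift is constant in $x$, so it even stays in $L^2_{uloc}$) but violating the integral formula, so some argument must pin down this freedom. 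The correct use of (\ref{HPEquiv}) is different: one shows, after mollifying in time, that the difference $w(t):=u(t)-e^{t\Delta}u_0+\int_0^te^{(t-s)\Delta}\mathbb{P}\nabla\cdot(u\otimes u)\,ds$ is divergence-free and solves a heat equation forced by a gradient, deduces that $w(t)$ is harmonic in $x$, and then uses the mean-value property: a harmonic function with finite $L^2_{uloc}$ norm is bounded, hence constant by Liouville, and the constant is identified from the initial data together with the normalisation $P=R\otimes R\,(u\otimes u)$. Without this step the implication is not proved. A secondary, smaller slip: in $(2)\Rightarrow(1)$ you claim the kernel bound plus $u\otimes u\in L^1_tL^1_{uloc,x}$ ``makes $u$ lie in $\cap_{s<T}L^2_tL^2_{uloc,x}$''; it yields at best $L^1_{uloc}$ control of the Duhamel term, and in any case the $L^2_{uloc}$ membership of $u$ is a standing hypothesis of the theorem, not something to be derived there.
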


\section{Time decay estimates for the heat and Oseen kernels}

We prove time decay estimates for the operators
$e^{t \Delta}$ and $e^{t \Delta } \mathbb{P} (\nabla \cdot \ \ )$.
These estimates turn out to be fundamental in the study of 
the Navier--Stokes problem with small data since 
the pioneering work of Kato \cite{Kat}. Following the 
same philosophy we take advantage of them in order to 
get regularity criteria. This is natural by 
working with the integral formulation (\ref{CauchyNSInt}). 

We investigate the connection between homogeneous weights
and angular regularity by working in $L^{p}_{|x|}L^{\p}_{\theta}$ spaces. 
In particular we show that higher angular 
integrability allows to consider a larger set of weights.

 As mentioned the idea of distinguish radial and angular 
 integrability often occurs in 
 harmonic analysis and PDE's. 
 In particular we refer to \cite{DL} where this technology has been applied to recover in a more general setting the improvements to Sobolev embeddings and Caffarelli-Kohn-Nirenberg inequalities known in the radial case by \cite{DenapoliDrelichmanDuran09-a, DenapoliDrelichman09-aBis, DenapoliDrelichmanDuran10-a, DenapoliDrelichmanDuran11-a, Rubin83-a}.

We need the following

\begin{lemma}[\cite{DL}]\label{cor:nonhom}
  Let $n \geq 2$ and $1\le p\le q\le\infty$, 
  $1\le\p\le\q\le\infty$. Assume
  $\alpha,\beta,\gamma$ satisfy the set of conditions
  \begin{equation}\label{eq:condDL}
    \beta > - \frac nq,\qquad \alpha<\frac{n}{p'},\qquad
    \alpha - \beta \ge (n-1)
      \left(\frac1q-\frac1p+\frac{1}{\p}-\frac{1}{\q}\right),
  \end{equation}
  \begin{equation}\label{eq:condabg}
    \alpha - \beta + \gamma>n\left(1+\frac1q-\frac1p\right).
  \end{equation}
  Then 
  \begin{equation}\label{ourHLS}
    \||x|^{\beta}S_{\gamma} \phi\|
        _{L^{q}_{|x|}L^{\q }_{\theta}} 
    \le C
    \| |x|^{\alpha} \phi\|_{L^{p}_{|x|}L^{\p }_{\theta}},
    \end{equation}
where 
$$
S_{\gamma} \phi := \int_{\Rn} K(x-y)\phi(y) \ dy, 
$$
and the kernel $K$ satisfies 
$$
|K(x)| \leq \frac{Const}{(1+|x|^{2})^{\gamma/2}}.
$$
\end{lemma}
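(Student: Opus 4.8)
The plan is to deduce the estimate from its homogeneous counterpart, the mixed radial--angular Stein--Weiss inequality
$$
\bigl\| |x|^{\beta}\,(|\cdot|^{-\mu}*\phi)\bigr\|_{L^{q}_{|x|}L^{\q}_{\theta}}
\le C\,\bigl\| |x|^{\alpha}\phi\bigr\|_{L^{p}_{|x|}L^{\p}_{\theta}},
$$
valid for $0<\mu<n$ under the conditions (\ref{eq:condDL}) together with the scaling identity $\alpha-\beta+\mu=n(1+\tfrac1q-\tfrac1p)$. I choose $\mu:=n(1+\tfrac1q-\tfrac1p)-(\alpha-\beta)$, so that this identity holds. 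A short computation from (\ref{eq:condDL})--(\ref{eq:condabg}) shows $0<\mu<\gamma$ — positivity uses $\alpha<n/p'$ and $\beta>-n/q$, while $\mu<\gamma$ is exactly (\ref{eq:condabg}) — and also $\mu\le n$, which follows from the third inequality in (\ref{eq:condDL}) together with $p\le q$ and $\p\le\q$. Since $0<\mu<\gamma$, the hypothesis on $K$ gives $|K(x)|\le(1+|x|^{2})^{-\gamma/2}\le|x|^{-\mu}$, whence $|S_{\gamma}\phi|\le|\cdot|^{-\mu}*|\phi|$. If $\mu<n$ this reduces everything to the homogeneous inequality; in the degenerate case $\mu=n$ one has $\gamma>n$, so $K\in L^{1}(\Rn)$ and the estimate is elementary (Young's inequality, with the rapid decay of $K$ used to absorb the weights).

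For the homogeneous inequality I would pass to polar coordinates $x=\rho\,\omega$, $y=s\,\sigma$ and use $|\rho\omega-s\sigma|^{-\mu}=(\rho^{2}+s^{2}-2\rho s\,\omega\cdot\sigma)^{-\mu/2}$. For fixed $\rho$ this kernel is zonal in $(\omega,\sigma)$, so Young's inequality for convolution against zonal kernels on $\mathbb{S}^{n-1}$ (obtained by interpolating Minkowski's and H\"older's inequalities; the Young exponent $\R$ satisfies $1/\R=1-1/\p+1/\q$) yields
$$
\bigl\|(|\cdot|^{-\mu}*\phi)(\rho\,\cdot\,)\bigr\|_{L^{\q}(\mathbb{S}^{n-1})}
\le\int_{0}^{\infty}\|\kappa_{\rho,s}\|_{L^{\R}_{*}}\,\|\phi(s\,\cdot\,)\|_{L^{\p}(\mathbb{S}^{n-1})}\,s^{n-1}\,ds,\qquad
\kappa_{\rho,s}(t):=(\rho^{2}+s^{2}-2\rho s\,t)^{-\mu/2},
$$
with $\|\cdot\|_{L^{\R}_{*}}$ the $L^{\R}$ norm on $[-1,1]$ against $(1-t^{2})^{(n-3)/2}dt$. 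By homogeneity $\|\kappa_{\rho,s}\|_{L^{\R}_{*}}=\rho^{-\mu}\Psi(s/\rho)$, and an elementary analysis of $\Psi(\tau):=\|\kappa_{1,\tau}\|_{L^{\R}_{*}}$ shows $\Psi$ is bounded near $\tau=0$, behaves like $\tau^{-\mu}$ as $\tau\to\infty$, and near the diagonal $\tau=1$ is bounded when $\mu\R<n-1$ and otherwise satisfies $\Psi(\tau)\lesssim|1-\tau|^{(n-1)/\R-\mu}$ (with an extra logarithm in the equality case). Writing $g(s):=\|\phi(s\,\cdot\,)\|_{L^{\p}_{\theta}}$, we are thereby reduced to the one-dimensional weighted inequality
$$
\Bigl\|\rho^{\beta-\mu}\!\int_{0}^{\infty}\!\Psi(s/\rho)\,g(s)\,s^{n-1}\,ds\Bigr\|_{L^{q}(\rho^{n-1}d\rho)}
\le C\,\bigl\|s^{\alpha}g\bigr\|_{L^{p}(s^{n-1}ds)}.
$$

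This I would treat via the substitution $\rho=e^{x}$, $s=e^{y}$: thanks to the scaling identity the two exponential weights that appear cancel, and the inequality collapses to a genuine convolution estimate on $\mathbb{R}$, namely $\|\widetilde\Phi*\widehat g\|_{L^{q}(\mathbb{R})}\le C\,\|\widetilde\Phi\|_{L^{\eta}(\mathbb{R})}\|\widehat g\|_{L^{p}(\mathbb{R})}$ with $1/\eta=1+1/q-1/p$ and $\widetilde\Phi(z)=e^{(a-n)z}\Psi(e^{-z})$ for an explicit exponent $a$. Young's inequality closes the argument once $\widetilde\Phi\in L^{\eta}(\mathbb{R})$, and the three conditions in (\ref{eq:condDL}) serve exactly to guarantee this: integrability of $\widetilde\Phi^{\eta}$ as $z\to+\infty$ (i.e.\ $\tau\to0$) is equivalent to $\alpha<n/p'$; as $z\to-\infty$ (i.e.\ $\tau\to\infty$) it is equivalent to $\beta>-n/q$; and across the diagonal $z=0$ (i.e.\ $\tau=1$), after substituting $1/\R=1-1/\p+1/\q$, it is equivalent to the third inequality in (\ref{eq:condDL}).

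The main obstacle is precisely this diagonal estimate. When $\alpha-\beta>(n-1)(1/q-1/p+1/\p-1/\q)$ the kernel $\widetilde\Phi$ lies genuinely in $L^{\eta}(\mathbb{R})$ and ordinary Young suffices; in the equality case $\widetilde\Phi$ belongs only to weak $L^{\eta}$, so Young's inequality must be replaced by its weak-type Hardy--Littlewood--Sobolev version, which forces the restriction $1<p\le q<\infty$. The remaining endpoint exponents in $p,q$, together with the degenerate angular values $\p=1$ and $\q=\infty$ (where the sphere Young inequality degenerates), then require separate elementary estimates. Modulo these borderline cases the argument is routine bookkeeping with the exponents in (\ref{eq:condDL})--(\ref{eq:condabg}).
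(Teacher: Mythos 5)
This lemma is imported verbatim from \cite{DL}; the paper contains no proof of it, so the only meaningful comparison is with the argument in that reference. Your sketch is essentially that argument: dominate the kernel by $|x|^{-\mu}$ with $\mu:=n(1+\tfrac1q-\tfrac1p)-(\alpha-\beta)$ forced by scaling, reduce to the homogeneous angular Stein--Weiss inequality, and prove the latter by spherical Young for zonal kernels followed by the logarithmic change of variables that turns the radial part into a one-dimensional convolution. Your exponent bookkeeping is correct: $0<\mu<\gamma$ from \eqref{eq:condDL}--\eqref{eq:condabg}, $\mu\le n$ from the third condition together with $p\le q$, $\p\le\q$ (with $\mu=n$ forcing $p=q$, $\p=\q$, $\alpha=\beta$, $\gamma>n$, which your integrable-kernel case covers), and the identification of $\alpha<n/p'$, $\beta>-n/q$ and the third condition with integrability of the one-dimensional kernel at $+\infty$, $-\infty$ and across the diagonal respectively is exactly right.

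The one place where the sketch is genuinely incomplete is the borderline configuration $p=q$ with \emph{equality} in the third condition of \eqref{eq:condDL} and $\p<\q$. There the Young exponent is $\eta=1$, and your own diagonal analysis gives $\Psi(\tau)\sim|1-\tau|^{(n-1)/\R-\mu}$ with $(n-1)/\R-\mu=-1$, so $\widetilde\Phi$ has a non-integrable $|z|^{-1}$ singularity and neither strong nor weak Young applies; your closing remark about ``separate elementary estimates'' for endpoints does not address this case, and it is not the degenerate $\mu=n$ case you do handle (that one requires $\p=\q$ as well). To treat it one must either exploit the strict inequality in \eqref{eq:condabg} by splitting $K$ into a local piece (dominated by $|x|^{-\mu_0}$ with $\mu_0<\mu$, softening the diagonal) and a tail piece, or impose strictness in the third condition when $p=q$, which is how the source reference arranges its hypotheses. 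Aside from this borderline point, the proposal faithfully reconstructs the proof of the cited result.
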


\begin{remark} 
Let point out that:

\begin{itemize} 
   \item The assumptions $\beta > - \frac nq, \alpha<\frac{n}{p'}$ are necessary
   to ensure local integrability;
   \item The assumption (\ref{eq:condabg}) is due to the smoothness of the kernel 
   in the origin. It is less restrictive than its counterpart in the 
   homogeneous case (see \cite{DL}) 
   $$
   \alpha-\beta+\gamma = n\left(1+\frac1q-\frac1p\right),
   $$ 
   that follows by scaling;
   \item The assumption 
   $$
   \alpha-\beta \ge (n-1) \left(\frac1q-\frac1p+\frac{1}{\p}-\frac{1}{\q}\right)
   $$ 
   follows by testing the inequality under 
   translations.       
\end{itemize}  
\end{remark}
It is useful to define the quantity
\begin{equation}\label{LambdaDef}
\Lambda (\alpha, p, \p) := \alpha + \frac{n-1}{p} - \frac{n-1}{\p}.
\end{equation}
Notice that 
$$
\alpha-\beta \ge (n-1) \left(\frac1q-\frac1p+\frac{1}{\p}-\frac{1}{\q} \right) 
\Leftrightarrow
\Lambda(\alpha,p,\p) \ge \Lambda(\beta,q,\q).$$ 
This notation is more convenient for our purposes; we use also simply $\Lambda_{\alpha}$ when the values of $p,\p$ will be clear by the context.

\begin{proposition}\label{PDecayCor}
Let $n\geq 2$, $1 \leq p \leq q \leq +\infty$ and $1 \leq \p \leq \q \leq +\infty$. Assume further that $\alpha, \beta$ satisfy the set of conditions 
 \begin{equation}\label{eq:condDL(Heat)}
    \beta > -\frac nq,\qquad \alpha<\frac{n}{p'}, \qquad
    \Lambda (\alpha,p,\p) \geq \Lambda (\beta,q,\q).
  \end{equation}
Then for each multi-index $\eta$
\begin{enumerate}
         \item \begin{equation}\label{PHeatDer}
     \||x|^{\beta} \partial^{\eta} e^{t\Delta}u_{0}\|_{L^{q}_{|x|}L^{\q}_{\theta}} 
     \leq \frac{c_{\eta}}{t^{(|\eta| + \frac{n}{p}-\frac{n}{q} + \alpha-\beta)/2}} 
     \| |x|^{\alpha} u_{0}\|_{L^{p}_{|x|}L^{\p}_{\theta}}, \qquad t>0,
     \end{equation}
     provided that $|\eta| + \frac{n}{p}-\frac{n}{q} + \alpha-\beta \geq0$,
      \item \begin{equation}\label{PGHeatDer}
       \||x|^{\beta} \partial^{\eta} e^{t\Delta} \mathbb{P} \nabla 
       \cdot F\|_{L^{q}_{|x|}L^{\q}_{\theta}} 
       \leq \frac{d_{\eta}}{t^{(1 + |\eta| + \frac{n}{p} -\frac{n}{q} +\alpha -\beta)/2}} \| |x|^{\alpha} F\|_{L^{p}_{|x|}L^{\p}_{\theta}}, \qquad t>0,
         \end{equation} 
provided that $1+ |\eta| + \frac{n}{p}-\frac{n}{q} + \alpha-\beta > 0$.
\end{enumerate}

\end{proposition}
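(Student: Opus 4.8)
The plan is to derive both estimates from the single weighted convolution inequality of Lemma~\ref{cor:nonhom} by means of a parabolic rescaling, arranging matters so that the whole $t$--dependence comes from the scaling behaviour of the norms. \emph{Step 1 (kernel representations).} I would first record the convolution structure of the two operators. The operator $\partial^{\eta}e^{t\Delta}$ is convolution with the kernel $H^{\eta}_{t}(x)=t^{-(n+|\eta|)/2}h_{\eta}(x/\sqrt t)$, where $h_{\eta}=(4\pi)^{-n/2}\partial^{\eta}\bigl(e^{-|\cdot|^{2}/4}\bigr)$ belongs to the Schwartz class, so that $|h_{\eta}(x)|\le C_{\eta,\gamma}(1+|x|^{2})^{-\gamma/2}$ for \emph{every} $\gamma>0$. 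By Proposition~\ref{OseenDecayFinale}, $\partial^{\eta}e^{t\Delta}\mathbb{P}\nabla\cdot$ is, componentwise, a finite sum of convolution operators with kernels $\widetilde K^{\eta}_{i,j,k}(t,x)=t^{-(n+1+|\eta|)/2}(\partial^{\eta}k_{i,j,k})(x/\sqrt t)$, where $|\partial^{\eta}k_{i,j,k}(y)|\le C_{\eta}(1+|y|)^{-(n+1+|\eta|)}\le C_{\eta}(1+|y|^{2})^{-(n+1+|\eta|)/2}$. Hence the fixed spatial profile of the heat operator is of the class $S_{\gamma}$ of Lemma~\ref{cor:nonhom} for $\gamma$ arbitrarily large, whereas that of $e^{t\Delta}\mathbb{P}\nabla\cdot$ is of class $S_{\gamma}$ only with $\gamma=n+1+|\eta|$.

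\emph{Step 2 (rescaling).} Writing $D_{\lambda}f:=f(\lambda\,\cdot\,)$ and substituting $y=\sqrt t\,z$ in the convolution integrals, one obtains the identities
\[
(\partial^{\eta}e^{t\Delta}u_{0})(x)=t^{-|\eta|/2}\,\bigl(h_{\eta}*D_{\sqrt t}u_{0}\bigr)(x/\sqrt t),
\]
and, componentwise, $(\partial^{\eta}e^{t\Delta}\mathbb{P}\nabla\cdot F)_{i}(x)=t^{-(1+|\eta|)/2}\sum_{j,k}\bigl((\partial^{\eta}k_{i,j,k})*D_{\sqrt t}F_{j,k}\bigr)(x/\sqrt t)$. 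On the other hand the mixed norm scales exactly, $\bigl\|\,|x|^{\sigma}f(\lambda\,\cdot\,)\bigr\|_{L^{q}_{|x|}L^{\q}_{\theta}}=\lambda^{-\sigma-n/q}\bigl\|\,|x|^{\sigma}f\bigr\|_{L^{q}_{|x|}L^{\q}_{\theta}}$, since the spherical factor is dilation invariant and the radial measure $\rho^{n-1}d\rho$ contributes the power $\lambda^{-n/q}$; likewise on $L^{p}_{|x|}L^{\p}_{\theta}$. Applying this with $\lambda=t^{-1/2}$ turns the left-hand side of (\ref{PHeatDer}) into $t^{-|\eta|/2}\,t^{(\beta+n/q)/2}\,\|\,|x|^{\beta}(h_{\eta}*D_{\sqrt t}u_{0})\|_{L^{q}_{|x|}L^{\q}_{\theta}}$, and similarly the left-hand side of (\ref{PGHeatDer}) into the analogous expression with prefactor $t^{-(1+|\eta|)/2}\,t^{(\beta+n/q)/2}$.

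\emph{Step 3 (applying Lemma~\ref{cor:nonhom} and counting powers of $t$).} To the inner convolutions I apply Lemma~\ref{cor:nonhom}: in the heat case with $\gamma$ chosen so large that (\ref{eq:condabg}) holds, which is legitimate because $h_{\eta}$ is Schwartz; in the Oseen case with the forced value $\gamma=n+1+|\eta|$, for which (\ref{eq:condabg}) becomes precisely $1+|\eta|+n/p-n/q+\alpha-\beta>0$. In both cases the remaining hypotheses of Lemma~\ref{cor:nonhom}, namely $\beta>-n/q$, $\alpha<n/p'$ and $\Lambda(\alpha,p,\p)\ge\Lambda(\beta,q,\q)$ (this last via the reformulation of (\ref{eq:condDL}) recorded right after (\ref{LambdaDef})), are exactly (\ref{eq:condDL(Heat)}). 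The Lemma thus bounds $\|\,|x|^{\beta}(h_{\eta}*D_{\sqrt t}u_{0})\|_{L^{q}_{|x|}L^{\q}_{\theta}}$ by $C\,\|\,|x|^{\alpha}D_{\sqrt t}u_{0}\|_{L^{p}_{|x|}L^{\p}_{\theta}}$, and a final use of the scaling law gives $\|\,|x|^{\alpha}D_{\sqrt t}u_{0}\|_{L^{p}_{|x|}L^{\p}_{\theta}}=t^{-(\alpha+n/p)/2}\,\|\,|x|^{\alpha}u_{0}\|_{L^{p}_{|x|}L^{\p}_{\theta}}$ (and likewise for $F$). Multiplying together the three powers of $t$ produces the exponent $-(|\eta|+n/p-n/q+\alpha-\beta)/2$ in (\ref{PHeatDer}) and $-(1+|\eta|+n/p-n/q+\alpha-\beta)/2$ in (\ref{PGHeatDer}).

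The dilation identities, the exponent bookkeeping, and the $L^{\infty}$ endpoints in $p,q,\p,\q$ (dealt with through the corresponding sup-norms) are routine. The one point that really needs care --- and is the actual content of the statement --- is that the admissible decay parameter $\gamma$ in Lemma~\ref{cor:nonhom} is dictated by the kernel: for the Gaussian it is unrestricted, so (\ref{eq:condDL(Heat)}) is the whole constraint and one checks that $p\le q$, $\p\le\q$ together with $\Lambda(\alpha,p,\p)\ge\Lambda(\beta,q,\q)$ automatically force $|\eta|+n/p-n/q+\alpha-\beta\ge0$; for $e^{t\Delta}\mathbb{P}\nabla\cdot$ the profile decays only like $(1+|y|^{2})^{-(n+1+|\eta|)/2}$, which is precisely why within this scheme the strict inequality $1+|\eta|+n/p-n/q+\alpha-\beta>0$ has to be imposed in part (2).
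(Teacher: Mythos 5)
Your proposal is correct and follows essentially the same route as the paper: a parabolic rescaling reducing to the fixed-time profile, followed by an application of Lemma~\ref{cor:nonhom} with $\gamma$ arbitrarily large for the Gaussian and $\gamma=n+1+|\eta|$ for the Oseen kernel, which is exactly where the condition $1+|\eta|+\frac np-\frac nq+\alpha-\beta>0$ arises. The exponent bookkeeping matches the paper's, and your added observation that $|\eta|+\frac np-\frac nq+\alpha-\beta\ge0$ already follows from $p\le q$, $\p\le\q$ and $\Lambda(\alpha,p,\p)\ge\Lambda(\beta,q,\q)$ is a correct (if inessential) refinement.
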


\begin{proof}
The proof follows by Lemma (\ref{cor:nonhom}) and scaling considerations. At first notice
\begin{equation}\label{HeatScaling}
e^{t\Delta} \phi = S_{\sqrt{t}}e^{\Delta}S_{1/\sqrt{t}} \phi,
\end{equation}
where $S_{\lambda}$ is defined by
\begin{equation}
(S_{\lambda}\phi)(x)= \phi\left( \frac{x}{\lambda} \right).
\end{equation}
Then 
\begin{equation}
\| |x|^{\beta} \partial^{\eta} S_{\lambda} \phi \|_{\Lqqtilde} = \lambda^{\frac{n}{q} +\beta -|\eta|}\||x|^{\beta}\phi\|_{\Lqqtilde}.
\end{equation}
We get
\begin{eqnarray}
\||x|^{\beta} \partial^{\eta} e^{t\Delta} u_{0}\|_{\Lqqtilde} &=& 
\| |x|^{\beta}\partial^{\eta} S_{\sqrt{t}}e^{\Delta}S_{1/ \sqrt{t}} u_{0}\|_{\Lqqtilde} \nonumber \\
&=& t^{(\frac{n}{q} + \beta - |\eta|)/2} \||x|^{\beta} (\partial^{\eta} e^{\Delta})S_{1/\sqrt{t}} u_{0}\|_{\Lqqtilde} \nonumber \\
&\leq & \frac{c_{\eta}}{t^{(-\frac{n}{q} -\beta + |\eta |)/2}}\||x|^{\alpha}S_{1/ \sqrt{t}}u_{0}\|_{\Lpptilde} \nonumber \\
&=& \frac{c_{\eta}}{t^{(|\eta| + \frac{n}{p}-\frac{n}{q}+\alpha -\beta)/2}}\||x|^{\alpha} u_{0}\|_{\Lpptilde}, \nonumber
\end{eqnarray}
provided that
$$ 
\Lambda(\alpha,p,\p) \geq \Lambda(\beta,q,\q).
$$
Notice that the third condition in (\ref{eq:condDL}) is 
trivially satisfied by the heat kernel.
To prove (\ref{PGHeatDer}) we have to work with the operator 
$e^{t\Delta} \mathbb{P} (\nabla \cdot \ \ ) $
that is (see Lemma \ref{OseenDecayFinale}) 
a convolution operator with a kernel $K$ such that
\begin{equation}\label{OseenInProofScaling}
K_{j,k,m}(t,x) := k_{j,k,m} \left( \frac{x}{\sqrt{t}} \right),
\end{equation}
and 
\begin{equation}\label{OseenInProof}
(1+|x|)^{1 + n + |\mu |}\partial^{ \mu} k_{j,k,m} \in L^{\infty}(\Rn),
\end{equation}
for each multi-index $\mu$. By (\ref{OseenInProofScaling}) follows
\begin{equation}\label{OssenScaling}
K(t) * \phi =\frac{1}{\sqrt{t}} S_{\sqrt{t}} k * S_{1/\sqrt{t}} \phi.
\end{equation}
So 
\begin{eqnarray}
\||x|^{\beta} \partial^{\eta} e^{t\Delta} \mathbb{P} \nabla \cdot F\|_{\Lqqtilde} &=&  
\| |x|^{\beta}  \partial^{\eta}K(t) * F\|_{\Lqqtilde}
\nonumber \\
&=& \frac{1}{\sqrt{t}}
\| |x|^{\beta}\partial^{\eta} S_{\sqrt{t}} k * S_{1/ \sqrt{t}} F\|_{\Lqqtilde}                     \nonumber \\
&=& \frac{1}{\sqrt{t}}t^{(\frac{n}{q} + \beta  - |\eta|)/2} \||x|^{\beta} (\partial^{\eta} k) * S_{1/ \sqrt{t}} F\|_{\Lqqtilde} \nonumber \\
&\leq & \frac{d_{\eta}}{t^{(-\frac{n}{q} -\beta +1 +  |\eta |)/2}}\||x|^{\alpha}S_{1/ \sqrt{t}} F\|_{\Lpptilde} \nonumber \\
&=& \frac{d_{\eta}}{t^{(1+ |\eta| + \frac{n}{p}-\frac{n}{q}+\alpha -\beta)/2}}\||x|^{\alpha} F\|_{\Lpptilde}, \nonumber
\end{eqnarray}
provided that $\Lambda_{\alpha} \geq \Lambda_{\beta}$. Notice that the optimal choice of $\gamma$ allowed by (\ref{OseenInProof}) is $\gamma = 1 + n + |\eta|$ that leads to
$$\alpha - \beta + 1 + n + |\eta| > n \Big( 1+\frac{1}{q}- \frac{1}{p} \Big) \Rightarrow 
1+ |\eta| + \frac{n}{p}-\frac{n}{q}+\alpha -\beta > 0.$$
\end{proof}

It's remarkable that the restriction $\Lambda_{\alpha} \geq \Lambda_{\beta}$ 
can be removed by localizing the estimate in the interior of
a space-time parabola above the origin. The size of the parabola
depends on the values of the difference 
$\Lambda_{\alpha} - \Lambda_{\beta}$ and increases 
as $\Lambda_{\alpha}- \Lambda_{\beta} \rightarrow 0^{-}$. 
In the limit case $\Lambda_{\alpha}=\Lambda_{\beta}$ we 
recover in fact Proposition \ref{PDecayCor}.
\begin{proposition}\label{LocalPDecay}
Let $n\geq 2$, $1 \leq p \leq q \leq +\infty$ and $1 \leq \p \leq \q \leq +\infty$. Assume further that $\alpha, \beta$ satisfy the set of conditions 
 \begin{equation}\label{eq:condDL(Heat)Loc}
    \beta > -\frac nq,\qquad \alpha<\frac{n}{p'}, \qquad
    \Lambda (\alpha,p,\p) < \Lambda (\beta,q,\q),
    \end{equation}
and define
$$
\Lambda_{\alpha,\beta} := \Lambda (\alpha,p,\p) - \Lambda (\beta,q,\q).
$$    
Let then 
$$
\Pi(R) := \left\{ (t,x) \in \Rpiu \times \Rn : 
\quad
\frac{|x|}{\sqrt{t}} \leq R \right\},
$$   
for each muti index $\eta$
\begin{enumerate}
         \item \begin{equation}\label{PHeatDerLoc}
     \| \mathbbm{1}_{\Pi(R)} |x|^{\beta} \partial^{\eta} e^{t\Delta}u_{0}\|_{L^{q}_{|x|}L^{\q}_{\theta}} 
     \leq \frac{c_{\eta}R^{-\Lambda_{\alpha,\beta}}}{t^{(|\eta| + \frac{n}{p}-\frac{n}{q} + \alpha-\beta)/2}} 
     \| |x|^{\alpha} u_{0}\|_{L^{p}_{|x|}L^{\p}_{\theta}}, \qquad t>0,
     \end{equation}
     provided that 
     $|\eta| + \frac{n}{p}-\frac{n}{q} + \alpha-\beta \geq0,
     \quad \Lambda_{\alpha, \beta} < 0,     
     $
      \item \begin{equation}\label{PGHeatDerLoc}
       \| \mathbbm{1}_{\Pi(R)} |x|^{\beta} \partial^{\eta} e^{t\Delta} \mathbb{P} \nabla \cdot F\|_{L^{q}_{|x|}L^{\q}_{\theta}} 
       \leq \frac{d_{\eta}R^{-\Lambda_{\alpha,\beta}}}{t^{(1 + |\eta| + \frac{n}{p} -\frac{n}{q} +\alpha -\beta)/2}} \| |x|^{\alpha} F\|_{L^{p}_{|x|}L^{\p}_{\theta}}, \qquad t>0,
         \end{equation} 
provided that 
$
1+ |\eta| + \frac{n}{p}-\frac{n}{q} + \alpha-\beta > 0,
\quad \Lambda_{\alpha, \beta} < 0.
$
\end{enumerate}

\end{proposition}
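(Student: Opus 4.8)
The plan is to obtain the localized estimates of Proposition \ref{LocalPDecay} directly from the non‑localized ones of Proposition \ref{PDecayCor}, exploiting the constraint $|x|\le R\sqrt t$ valid on $\Pi(R)$ to trade the excess power of the radial weight $|x|$ for a power of $\sqrt t$. To this end I would single out the \emph{critical} weight exponent $\beta_{0}$ defined by the borderline identity
$$
\Lambda(\beta_{0},q,\q)=\Lambda(\alpha,p,\p),\qquad\text{i.e.}\qquad
\beta_{0}:=\alpha+\frac{n-1}{p}-\frac{n-1}{\p}-\frac{n-1}{q}+\frac{n-1}{\q}.
$$
Since $\Lambda(\,\cdot\,,q,\q)$ is of the form (first argument plus a constant) and $\Lambda_{\alpha,\beta}=\Lambda(\alpha,p,\p)-\Lambda(\beta,q,\q)=\beta_{0}-\beta$, the hypothesis $\Lambda_{\alpha,\beta}<0$ says exactly that $\beta_{0}<\beta$, with $\beta-\beta_{0}=-\Lambda_{\alpha,\beta}>0$.

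The heart of the matter is then the pointwise inequality on the parabola: for $|x|\le R\sqrt t$ and $\beta-\beta_{0}>0$,
$$
\mathbbm{1}_{\Pi(R)}(t,x)\,|x|^{\beta}
=\mathbbm{1}_{\Pi(R)}(t,x)\,|x|^{\beta_{0}}\,|x|^{\beta-\beta_{0}}
\le\bigl(R\sqrt t\bigr)^{\beta-\beta_{0}}|x|^{\beta_{0}}
=R^{-\Lambda_{\alpha,\beta}}\,t^{-\Lambda_{\alpha,\beta}/2}\,|x|^{\beta_{0}}.
$$
Plugging this into the left‑hand side of (\ref{PHeatDerLoc}) gives
$$
\|\mathbbm{1}_{\Pi(R)}|x|^{\beta}\partial^{\eta}e^{t\Delta}u_{0}\|_{\Lqqtilde}
\le R^{-\Lambda_{\alpha,\beta}}\,t^{-\Lambda_{\alpha,\beta}/2}\,\||x|^{\beta_{0}}\partial^{\eta}e^{t\Delta}u_{0}\|_{\Lqqtilde},
$$
and I would now invoke Proposition \ref{PDecayCor}(1) with $\beta_{0}$ in place of $\beta$. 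The pair $(\alpha,\beta_{0})$ meets (\ref{eq:condDL(Heat)}): the bound $\alpha<n/p'$ is unchanged; $\Lambda(\alpha,p,\p)\ge\Lambda(\beta_{0},q,\q)$ holds with equality by definition of $\beta_{0}$; and the exponent positivity $|\eta|+\frac{n}{p}-\frac{n}{q}+\alpha-\beta_{0}\ge0$ follows from $\beta_{0}<\beta$ together with the standing assumption $|\eta|+\frac{n}{p}-\frac{n}{q}+\alpha-\beta\ge0$. Proposition \ref{PDecayCor} then bounds the last factor by $c_{\eta}\,t^{-(|\eta|+\frac{n}{p}-\frac{n}{q}+\alpha-\beta_{0})/2}\,\||x|^{\alpha}u_{0}\|_{\Lpptilde}$; multiplying by the prefactor $R^{-\Lambda_{\alpha,\beta}}t^{-\Lambda_{\alpha,\beta}/2}$ and using $\Lambda_{\alpha,\beta}=\beta_{0}-\beta$, the powers of $t$ add up precisely to $t^{-(|\eta|+\frac{n}{p}-\frac{n}{q}+\alpha-\beta)/2}$, which is (\ref{PHeatDerLoc}).

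The estimate (\ref{PGHeatDerLoc}) for $e^{t\Delta}\mathbb{P}\nabla\cdot F$ is obtained in exactly the same way, using Proposition \ref{PDecayCor}(2) instead of (1): the same pointwise weight inequality reduces the left side of (\ref{PGHeatDerLoc}) to $R^{-\Lambda_{\alpha,\beta}}t^{-\Lambda_{\alpha,\beta}/2}\||x|^{\beta_{0}}\partial^{\eta}e^{t\Delta}\mathbb{P}\nabla\cdot F\|_{\Lqqtilde}$, and since $1+|\eta|+\frac{n}{p}-\frac{n}{q}+\alpha-\beta_{0}>1+|\eta|+\frac{n}{p}-\frac{n}{q}+\alpha-\beta>0$ the smoothness order $\gamma=n+1+|\eta|$ of the Oseen kernel (Proposition \ref{OseenDecayFinale}) is still enough for the application of Proposition \ref{PDecayCor}(2); collecting the powers of $t$ as before yields the claimed $t^{-(1+|\eta|+\frac{n}{p}-\frac{n}{q}+\alpha-\beta)/2}$.

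The step I expect to require genuine care is precisely the reduction to the equality case of Proposition \ref{PDecayCor}: one must verify that the shifted exponent $\beta_{0}$ still obeys the local‑integrability condition $\beta_{0}>-n/q$ (equivalently $\Lambda(\alpha,p,\p)>-1/q-(n-1)/\q$), and since $\beta_{0}<\beta$ while only $\beta>-n/q$ is assumed, it is here — rather than in the (routine) bookkeeping of the $t$‑ and $R$‑powers — that the standing hypotheses must be used in full. Everything else is just a scale‑covariant transcription of the proof of Proposition \ref{PDecayCor} with the cutoff $\mathbbm{1}_{\Pi(R)}$ carried along; in particular no power of $t$ is lost in that step because the set $\Pi(R)=\{|x|\le R\sqrt t\}$ is invariant under the parabolic dilation $x\mapsto x/\sqrt t$ built into (\ref{HeatScaling}), so the cutoff passes through the scaling identity untouched, and letting $\Lambda_{\alpha,\beta}\to0^{-}$ one has $R^{-\Lambda_{\alpha,\beta}}\to1$, in agreement with Proposition \ref{PDecayCor}.
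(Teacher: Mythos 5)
Your argument is essentially the paper's own proof: the paper likewise uses the pointwise bound $R^{-\Lambda_{\alpha,\beta}}\bigl(|x|/\sqrt t\bigr)^{\Lambda_{\alpha,\beta}}\ge 1$ on $\Pi(R)$ to shift the output weight from $\beta$ to the critical value $\beta+\Lambda_{\alpha,\beta}$ (your $\beta_{0}$), pays the factor $R^{-\Lambda_{\alpha,\beta}}t^{-\Lambda_{\alpha,\beta}/2}$, and then runs the unlocalized scaling argument of Proposition \ref{PDecayCor} at the equality case $\Lambda(\alpha,p,\p)=\Lambda(\beta_{0},q,\q)$, with the same bookkeeping of the powers of $t$. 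The local-integrability condition $\beta_{0}>-n/q$ that you flag is indeed required for that application and is left implicit (and unverified) in the paper's proof as well, so on this one point you are, if anything, more careful than the original.
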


\begin{proof}
Let us write simply $\Lambda$ instead of $\Lambda_{\alpha,\beta}$. Of course
$$
\Lambda < 0 \quad \Rightarrow \quad  R^{-\Lambda} \Big| \frac{x}{\sqrt{t}} \Big|^{\Lambda} \geq 1, \quad
\mbox{if} \quad (t,x)\in \Pi(R).
$$
Then 
\begin{eqnarray}
\|\mathbbm{1}_{\Pi(R)} |x|^{\beta} \partial^{\eta} e^{t\Delta} u_{0}\|_{\Lqqtilde} &=& 
\|\mathbbm{1}_{\Pi(R)} |x|^{\beta}\partial^{\eta} S_{\sqrt{t}}e^{\Delta}S_{1/ \sqrt{t}} u_{0}\|_{\Lqqtilde} \nonumber \\
&\leq & \frac{R^{-\Lambda}}{t^{\Lambda/2}}\| |x|^{\beta +\Lambda}\partial^{\eta} S_{\sqrt{t}}e^{\Delta}S_{1/ \sqrt{t}} u_{0}\|_{\Lqqtilde} \nonumber \\
&=& \frac{R^{-\Lambda}}{t^{\Lambda /2}} t^{(\frac{n}{q} + \beta + \Lambda - |\eta|)/2} \||x|^{\beta +\Lambda} (\partial^{\eta}e^{\Delta}) S_{1/\sqrt{t}} u_{0}\|_{\Lqqtilde} \nonumber \\
&\leq & \frac{c_{\eta}}{t^{(-\frac{n}{q} -\beta + |\eta |)/2}}\||x|^{\alpha}S_{1/ \sqrt{t}}u_{0}\|_{\Lpptilde} \nonumber \\
&=& \frac{c_{\eta}}{t^{(|\eta| + \frac{n}{p}-\frac{n}{q}+\alpha -\beta)/2}}\||x|^{\alpha} u_{0}\|_{\Lpptilde}, \nonumber
\end{eqnarray}
where the indexes relationships are consistent because
$$
\Lambda_{\alpha} \geq \Lambda(\Lambda_{\alpha,\beta} +\beta,p,\p) = \Lambda(\Lambda_{\alpha}-\Lambda_{\beta} + \beta,p,\p)= \Lambda_{\alpha}.
$$
The proof of (\ref{PGHeatDerLoc}) is analogous. 
\end{proof}
\begin{remark}
We have observed observed that the inequalities 
hold with an additional factor $R^{-\Lambda}$ after 
localization in the interior of a space-time parabola. 
Notice that this factor goes to $1$ as 
$\Lambda \rightarrow 0^{-}$. To get a constant independent on
$\Lambda$ it is instead necessary to restrict the 
size of the parabola. If we chose the constant equal to $K$, we need to restrict to
$$
\Pi(K) := \left\{  \frac{|x|}{\sqrt{t}}  \leq K^{-\frac{1}{\Lambda}} \right\}.  
$$ 
Notice that $\Pi (K)$ fills the whole space-time as $\Lambda \rightarrow 0^{-}$.
\end{remark}

Then integral estimates can be obtained
by the time decay properties.
Let us introduce another useful notation 
\begin{equation}\label{OmegaDef}
\Omega (\alpha,p,s) := \alpha + \frac np + \frac 2s.
\end{equation}

\begin{proposition}\label{IDecayCor}
Let $n\geq 2$, $1 \leq p \leq q < \frac{np}{(|\eta| + \alpha-\beta)p + n-2}$, 
$p < r < +\infty$ and $ 1 \leq \p \leq \q \leq +\infty$. 
Assume further that $\alpha, \beta$ satisfy 
 \begin{equation}\label{eq:condDL(IHeat)}
    \beta > -\frac nq,\qquad \alpha<\frac{n}{p'}.
   \end{equation}
Then for each multi-index $\eta$
\begin{equation}\label{IHeat}
     \| |x|^{\beta} \partial^{\eta} e^{t\Delta}u_{0}\|_{L^{r}_{t}L^{q}_{|x|}L^{\q}_{\theta}} 
     \leq c_{\eta} 
     \| |x|^{\alpha} u_{0}\|_{L^{p}_{|x|}L^{\p}_{\theta}}, \qquad t>0,
     \end{equation}
provided that
     \begin{equation}\label{OmegaScaling}
  |\eta| + \Omega(\alpha, p, \infty) = \Omega (\beta,q,r), 
  \quad \Lambda (\alpha,p,\p) \geq \Lambda (\beta,q,\q);
  \end{equation}
and 
\begin{equation}\label{IHeatLoc}
     \| \mathbbm{1}_{\Pi(R)} |x|^{\beta} \partial^{\eta} e^{t\Delta}u_{0}\|_{L^{r}_{t}L^{q}_{|x|}L^{\q}_{\theta}} 
     \leq c_{\eta} R^{-\Lambda_{\alpha,\beta}} 
     \| |x|^{\alpha} u_{0}\|_{L^{p}_{|x|}L^{\p}_{\theta}}, \qquad t>0,
     \end{equation}
provided that
     \begin{equation}\label{OmegaScalingLoc}
  |\eta| + \Omega(\alpha, p, \infty) = \Omega (\beta,q,r), 
  \quad \Lambda_{\alpha,\beta} := \Lambda (\alpha,p,\p) - \Lambda (\beta,q,\q) < 0,  
  \end{equation}
  \end{proposition}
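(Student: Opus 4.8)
The plan is to extract a weak‑type integrability in time from the pointwise‑in‑time decay already proved in Propositions~\ref{PDecayCor} and \ref{LocalPDecay}, and then to promote it to the full $L^{r}_{t}$ bound by real interpolation. Writing the scaling condition \eqref{OmegaScaling} out,
\begin{equation*}
|\eta|+\frac np-\frac nq+\alpha-\beta=\frac2r ,
\end{equation*}
so the exponent of $t$ in \eqref{PHeatDer} is precisely $1/r$. Hence, under \eqref{eq:condDL(IHeat)} and $\Lambda(\alpha,p,\p)\ge\Lambda(\beta,q,\q)$, Proposition~\ref{PDecayCor} gives
\begin{equation*}
\||x|^{\beta}\partial^{\eta}e^{t\Delta}u_{0}\|_{\Lqqtilde}\le c_{\eta}\,t^{-1/r}\,\||x|^{\alpha}u_{0}\|_{\Lpptilde},\qquad t>0 .
\end{equation*}
Since $t\mapsto t^{-1/r}$ lies in the Lorentz space $L^{r,\infty}((0,+\infty),dt)$, this is exactly a bound into $L^{r,\infty}_{t}\Lqqtilde$; and replacing Proposition~\ref{PDecayCor} by Proposition~\ref{LocalPDecay} one obtains the same weak‑type bound for $\one{\Pi(R)}|x|^{\beta}\partial^{\eta}e^{t\Delta}u_{0}$ with the extra factor $R^{-\Lambda_{\alpha,\beta}}$ in the range $\Lambda_{\alpha,\beta}<0$.

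To pass from $L^{r,\infty}_{t}$ to $L^{r}_{t}$ I would interpolate two such weak bounds obtained at slightly perturbed indices. The strict inequality $p<r<+\infty$ — which, under the scaling relation, is the same as the strict upper bound imposed on $q$ — is precisely what leaves room to place two exponents $r_{0}>r>r_{1}$ on either side of $r$ while keeping them admissible. In the generic situation $\Lambda(\alpha,p,\p)>\Lambda(\beta,q,\q)$ one moves only the weight, $\beta\mapsto\beta_{i}:=\beta\pm\varepsilon$ with $\varepsilon>0$ small: this turns $1/r$ into $1/r_{i}=1/r\mp\varepsilon/2$, it preserves $\beta_{i}>-n/q$ (because $\beta>-n/q$ is strict) and $\Lambda(\alpha,p,\p)\ge\Lambda(\beta_{i},q,\q)$ (because the latter inequality is strict), so Proposition~\ref{PDecayCor} gives $\||x|^{\beta_{i}}\partial^{\eta}e^{t\Delta}u_{0}\|_{L^{r_{i},\infty}_{t}\Lqqtilde}\lesssim\||x|^{\alpha}u_{0}\|_{\Lpptilde}$. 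As $r_{0}\ne r_{1}$ and the domain space is common, real interpolation — the identity $(L^{r_{0},\infty},L^{r_{1},\infty})_{\theta,r}=L^{r}$ together with the Stein--Weiss interpolation of the weights, $[|x|^{\beta_{0}}]^{1-\theta}[|x|^{\beta_{1}}]^{\theta}=|x|^{\beta}$ with $\theta$ fixed by $\beta=(1-\theta)\beta_{0}+\theta\beta_{1}$ (which automatically forces $1/r=(1-\theta)/r_{0}+\theta/r_{1}$) — yields
\begin{equation*}
\||x|^{\beta}\partial^{\eta}e^{t\Delta}u_{0}\|_{\Lrqqtilde}\le C\,\||x|^{\alpha}u_{0}\|_{\Lpptilde},
\end{equation*}
which is \eqref{IHeat}.

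The localized estimate \eqref{IHeatLoc} is proved in the same way, and in fact more easily: the hypothesis $\Lambda_{\alpha,\beta}<0$ is strict and stable under $\beta\mapsto\beta\pm\varepsilon$, so both interpolation endpoints are automatically admissible for Proposition~\ref{LocalPDecay}, and the constant $R^{-\Lambda_{\alpha,\beta}}$ — being independent of $t$ and $x$ — is simply carried through the interpolation.

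The main obstacle is the interpolation step in the critical case $\Lambda(\alpha,p,\p)=\Lambda(\beta,q,\q)$ of \eqref{IHeat}: there one cannot raise $\beta$, so one must instead move along the level set $\{\Lambda=\Lambda(\alpha,p,\p)\}$, adjusting $\q_{i}$ (or $q_{i}$) together with $\beta_{i}$ so as to keep the inequality for $\Lambda$ in force while still moving $1/r_{i}$ off $1/r$; this forces one to separate the finitely many boundary configurations $q=p$, $\p=\q$, $\q=+\infty$ and to choose the admissible direction of perturbation in each, and in the few corners where no admissible perturbation remains one recovers the bound from the non‑extreme cases by a monotone limiting argument in $\q$. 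The other point to be verified — that real interpolation of the mixed‑norm weighted spaces $L^{r_{i},\infty}_{t}L^{q_{i}}_{|x|}L^{\q_{i}}_{\theta}$ reproduces $\Lrqqtilde$ — is classical. Once the two appropriate weak‑type bounds are in hand, everything else is routine.
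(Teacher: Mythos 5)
Your proof follows essentially the same route as the paper: the scaling condition turns the pointwise time decay of Proposition \ref{PDecayCor} (resp.\ Proposition \ref{LocalPDecay}) into an $L^{r,\infty}_{t}$ bound, which is then upgraded to $L^{r}_{t}$ by real interpolation between two nearby admissible tuples. The only difference is that you make the choice of interpolation endpoints explicit (perturbing $\beta$, or moving along the level set of $\Lambda$ in the critical case) and flag the boundary configurations, whereas the paper leaves this selection implicit; this is a presentational refinement rather than a different argument.
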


\begin{proof}
By the time decay
 \item \begin{equation*}
     \||x|^{\beta} \partial^{\eta} e^{t\Delta}u_{0}\|_{L^{q}_{|x|}L^{\q}_{\theta}} 
     \leq \frac{c_{\eta}}{t^{(|\eta| + \frac{n}{p}-\frac{n}{q} + \alpha-\beta)/2}} 
     \| |x|^{\alpha} u_{0}\|_{L^{p}_{|x|}L^{\p}_{\theta}},
     \end{equation*}
follows that $\partial^{\eta}e^{t\Delta}u_{0}$ is bounded in the Lorentz space $L^{r,\infty}(\mathbb{R}^{+}; L^{q}_{|x|^{\beta q}d|x|}L^{\q}_{\theta})$ provided that $|\eta| + \Omega(\alpha, p, \infty) = \Omega (\beta,q,r)$. In fact
\begin{eqnarray} 
\|\||x|^{\beta}\partial^{\eta}e^{t\Delta}u_{0}\|_{\Lqqtilde}\|_{L^{r,\infty}_{t}} &\leq &
   c_{\eta}  \left\| \frac{1}{t^{(|\eta| + \frac{n}{p}-\frac{n}{q} + \alpha-\beta)/2}} 
     \| |x|^{\alpha} u_{0}\|_{L^{p}_{|x|}L^{\p}_{\theta}}\right\|_{L^{r,\infty}_{t}} \nonumber \\  
&\leq & c_{\eta} \left\|  \frac{1}{t^{(|\eta| + \frac{n}{p}-\frac{n}{q} + \alpha-\beta)/2}} \right\|_{L^{r,\infty}}
        \|u_{0}\|_{\Lpptilde}  \nonumber \\
        &\leq & c_{\eta} \|u_{0}\|_{\Lpptilde}, \nonumber
\end{eqnarray}     
when
 $$
 (|\eta| + \frac{n}{p} -\frac{n}{q} +\alpha -\beta)/2 = \frac{1}{r} \Rightarrow |\eta| + \Omega(\alpha, p, \infty) = \Omega (\beta,q,r).
 $$
Let now consider $(\alpha_{0},\beta_{0}, p_{0},\p_{0},q_{0},\q_{0},r_{0})$, $(\alpha_{1},\beta_{1}, p_{1},\p_{1},q_{1},\q_{1},r_{1})$ such that the assumptions of the Theorem are satisfied. We have the bounded operators
\begin{equation} 
\partial^{\eta} e^{t\Delta}: 
\left \{    
    \begin{array}{ccc}
     \Lpptildealphazero & \longrightarrow & L^{r_{0},\infty}_{t} \Lqqtildebetazero \\
     & & \\
     & & \\
     \Lpptildealphauno & \longrightarrow & L^{r_{1},\infty}_{t} \Lqqtildebetauno .  
      \end{array}
      \right.
\end{equation}
and we can use real interpolation with 
parameters $(\xi, r_{\xi}), 0 \leq \xi \leq 1$ provided that
\begin{equation}\label{rConstr}
p_{\xi} < r_{\xi},
\end{equation}
$$
\frac{1}{p_{\xi}} = (1- \xi)\frac{1}{p_{0}} + \frac{\xi}{ p_{1}},
$$  
$$
\frac{1}{q_{\xi}} = (1- \xi)\frac{1}{q_{0}} + \frac{\xi}{ q_{1}},
$$  
$$
\frac{1}{r_{\xi}} = (1- \xi)\frac{1}{r_{0}} + \frac{\xi}{ r_{1}},
$$  
$$
\frac{1}{\p_{\xi}} = (1- \xi)\frac{1}{\p_{0}} + \frac{\xi}{ \p_{1}},
$$  
$$
\frac{1}{\q_{\xi}} = (1- \xi)\frac{1}{\q_{0}} + \frac{\xi}{ \q_{1}},
$$  
$$
\alpha_{\xi}= (1-\xi)\alpha_{0} +\xi \alpha_{1},
$$
$$
\beta_{\xi}= (1-\xi)\beta_{0} +\xi \beta_{1}.
$$
Then since
$$
  \Big(L^{r_{0},\infty}_{t} \Lqqtildebetazero, L^{r_{1},\infty}_{t} \Lqqtildebetauno \Big)_{\xi,r_{\xi}} = L^{r_{\xi}}_{t} \Lqqtildebetaxi,
$$ 
we get the bounded operators
$$
\partial^{\eta}e^{t\Delta} u_{0} :
\Lpptildealphaxi \rightarrow   L^{r_{\xi}}_{t} \Lqqtildebetaxi. 
$$
It is now straightforward to check that the  indexes satisfy (\ref{eq:condDL(IHeat)}, \ref{OmegaScaling}) and the other assumptions. 
In particular (\ref{rConstr}) is ensured by $q_{\xi} < \frac{np_{\xi}}{(|\eta| + \alpha_{\xi} - \beta_{\xi}) p_{\xi} + n-2}$. 

Of course this method misses the endpoint $r=1$.
The estimates (\ref{IHeatLoc}) can be proved in the same 
way by using the localized time decay. 

\end{proof}
Then we bound the Duhamel term:
\begin{proposition}\label{DDecayCor}
Let $n\geq 2$, $2 \leq p \leq 2q \leq +\infty$, $2 < s < 2r < +\infty $ and $2 \leq \p \leq 2\q \leq +\infty$. Assume further that $\alpha, \beta$ satisfy 
 \begin{equation}\label{eq:condDL(DHeat)}
    \beta > -\frac nq,\qquad \alpha < \frac{n}{2} - \frac{n}{p},
   \end{equation}
then for each multi-index $\eta$
  \begin{equation}\label{DGHeat}
       \left\||x|^{\beta} \partial^{\eta} \int_{0}^{t} e^{(t-s)\Delta} 
       \mathbbm{P} \nabla  \cdot 
       (u \otimes u) \ ds \right\|_{L^{r}_{t}L^{q}_{|x|}L^{\q}_{\theta}} 
       \leq  d_{\eta} \| |x|^{\alpha} 
       u \|^{2}_{L^{s}_{t}L^{p}_{|x|}L^{\p}_{\theta}}, \qquad t>0, 
         \end{equation} 
provided that
       \begin{equation}\label{DGOmegaScaling}
  2 \Omega(\alpha, p, s) = \Omega (\beta,q,r) + 1 - |\eta |, \qquad
  2 \Lambda (\alpha,p,\p) \geq \Lambda (\beta,q,\q);
       \end{equation}
in particular (for $2 < r < \infty$)
   \begin{equation}\label{DGHeatq=p}
       \left\||x|^{\beta}  \int_{0}^{t} e^{(t-s)\Delta} \mathbbm{P} \nabla 
       \cdot (u \otimes u) \  ds\right\|_{L^{r}_{t}L^{q}_{|x|}L^{\q}_{\theta}} 
       \leq  d_{\eta} \| |x|^{\beta} 
       u \|^{2}_{L^{r}_{t}L^{q}_{|x|}L^{\q}_{\theta}}, \qquad t>0, 
         \end{equation} 
provided that
         \begin{equation}\label{DGOmegaScalingq=p}
          \frac 2r + \frac nq   = 1 - \beta , 
         \qquad \Lambda(\beta,q,\q) \geq 0.
         \end{equation}
     \end{proposition}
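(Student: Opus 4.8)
The plan is to derive (\ref{DGHeat}) from the pointwise-in-time decay estimate for $e^{t\Delta}\mathbb{P}\nabla\cdot(\ \cdot\ )$ proved in Proposition \ref{PDecayCor}(2), followed by a Hardy--Littlewood--Sobolev convolution bound in the time variable. The one elementary preliminary is that, with the convention $|u\otimes u|^{2}=\sum_{i,j}u_{i}^{2}u_{j}^{2}$, one has $|u\otimes u|=|u|^{2}$ pointwise, so that, straight from the definition (\ref{MixedNorms}),
\begin{equation*}
\| |x|^{2\alpha}(u\otimes u)(s)\|_{L^{p/2}_{|x|}L^{\p/2}_{\theta}}
=\| |x|^{\alpha}u(s)\|^{2}_{L^{p}_{|x|}L^{\p}_{\theta}}
\qquad\mbox{for every fixed }s.
\end{equation*}

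The first step is to apply Proposition \ref{PDecayCor}(2) with the source triple $(\alpha,p,\p)$ replaced by $(2\alpha,\,p/2,\,\p/2)$, with $F=(u\otimes u)(s)$ and $t-s$ in place of $t$. The ordering hypotheses $1\le p/2\le q$ and $1\le \p/2\le\q$ are exactly $2\le p\le 2q$, $2\le\p\le 2\q$; the conditions $\beta>-n/q$ and $2\alpha<n-2n/p$ (equivalently $\alpha<n/2-n/p$) are (\ref{eq:condDL(DHeat)}); and since $\Lambda(2\alpha,p/2,\p/2)=2\Lambda(\alpha,p,\p)$, the remaining requirement becomes $2\Lambda(\alpha,p,\p)\ge\Lambda(\beta,q,\q)$, the second relation of (\ref{DGOmegaScaling}). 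Combined with the identity above this yields, for $0<s<t$,
\begin{equation*}
\| |x|^{\beta}\partial^{\eta}e^{(t-s)\Delta}\mathbb{P}\nabla\cdot(u\otimes u)(s)\|_{L^{q}_{|x|}L^{\q}_{\theta}}
\le\frac{d_{\eta}}{(t-s)^{a}}\,\| |x|^{\alpha}u(s)\|^{2}_{L^{p}_{|x|}L^{\p}_{\theta}},
\qquad a:=\tfrac12\Big(1+|\eta|+\tfrac{2n}{p}-\tfrac nq+2\alpha-\beta\Big).
\end{equation*}

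The second step is to move $\partial^{\eta}$ inside the Duhamel integral and apply Minkowski's integral inequality in $L^{q}_{|x|}L^{\q}_{\theta}$, which gives
\begin{equation*}
\Big\| |x|^{\beta}\partial^{\eta}\!\int_{0}^{t}\!e^{(t-s)\Delta}\mathbb{P}\nabla\cdot(u\otimes u)\,ds\Big\|_{L^{q}_{|x|}L^{\q}_{\theta}}
\le d_{\eta}\int_{0}^{t}\frac{g(s)^{2}}{(t-s)^{a}}\,ds,\qquad g(s):=\| |x|^{\alpha}u(s)\|_{L^{p}_{|x|}L^{\p}_{\theta}},
\end{equation*}
and the right-hand side is $(|\cdot|^{-a}\one{(0,T)})\ast g^{2}$ evaluated at $t$. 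Since $s>2$ forces $a>0$ and $s<2r$ forces $a<1$, we have $|\cdot|^{-a}\one{(0,T)}\in L^{1/a,\infty}$, so the weak Young (O'Neil) inequality gives
\begin{equation*}
\big\|(|\cdot|^{-a}\one{(0,T)})\ast g^{2}\big\|_{L^{r}_{t}}\le C\,\|g^{2}\|_{L^{s/2}_{t}}=C\,\|g\|^{2}_{L^{s}_{t}}
\end{equation*}
exactly when $1+\tfrac1r=a+\tfrac2s$; rearranging this equality is precisely the scaling relation $2\Omega(\alpha,p,s)=\Omega(\beta,q,r)+1-|\eta|$ of (\ref{DGOmegaScaling}). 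Chaining the estimates gives (\ref{DGHeat}). Finally, (\ref{DGHeatq=p}) is the specialisation $\eta=0$, $(\alpha,p,s,\p)=(\beta,q,r,\q)$ of (\ref{DGHeat}): the scaling relation collapses to $\tfrac2r+\tfrac nq=1-\beta$ and the $\Lambda$--condition to $\Lambda(\beta,q,\q)\ge0$, which are (\ref{DGOmegaScalingq=p}), while the admissibility inequalities $q\ge2$, $r>2$ and $-n/q<\beta<n/2-n/q$ follow from $\tfrac2r+\tfrac nq=1-\beta$ together with $n\ge2$.

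The computations are routine once this reduction is set up; the only point requiring genuine care is the time convolution, where the endpoints $s=2$ and $r=+\infty$ must be excluded so that $0<a<1$ and O'Neil's inequality applies away from its endpoints — the same mechanism as the missed endpoint $r=1$ in Proposition \ref{IDecayCor} — and one must check throughout that substituting $(2\alpha,p/2,\p/2)$ into Proposition \ref{PDecayCor}(2) keeps the monotonicity of the exponents intact, which is guaranteed by the hypotheses $2\le p\le2q$ and $2\le\p\le2\q$.
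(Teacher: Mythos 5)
Your argument is correct and follows essentially the same route as the paper: Minkowski's inequality, the pointwise-in-time Oseen decay estimate of Proposition \ref{PDecayCor}(2) applied with source indices $(2\alpha,p/2,\widetilde{p}/2)$ via the identity $|u\otimes u|=|u|^{2}$, and then O'Neil's weak Young inequality in the time variable, with the scaling and $\Lambda$-conditions emerging exactly as in the paper's computation. The only cosmetic difference is that you substitute $F=u\otimes u$ at the outset, whereas the paper carries auxiliary indices $(\alpha_{0},p_{0},s_{0},\widetilde{p}_{0})$ and halves them at the end.
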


\begin{proof}
By Minkowski inequality and (\ref{PHeatDer})
\begin{eqnarray}
  & &\left\||x|^{\beta} \partial^{\eta} \int_{0}^{t} e^{(t-s)\Delta}\mathbb{P} 
   \nabla \cdot F(x,s) \ ds \right\|_{\Lrqqtilde}   \nonumber \\ 
  & \leq &\left\| \int_{\Rpiu} \| |x|^{\beta} \partial^{\eta} e^{(t-s)\Delta}\mathbb{P} 
   \nabla \cdot F \|_{\Lqqtilde} \ ds\right\|_{L^{r}_{t}}  \nonumber \\
  &\leq & d_{\eta} \left\| \int_{\mathbb{\Rpiu}} 
   \frac{1}{(t-s)^{(1 + |\eta | 
   +\frac{n}{p_{0}} -\frac{n}{q} +\alpha_{0} -\beta)/2}}\||x|^{\alpha_0}F \|_{\L^{p_0}_{|x|}L^{\p_0}_{\theta}}   
   \ ds  \right\|_{L^{r}_{t}},   \nonumber 
\end{eqnarray}
provided that
\begin{equation}\label{DuamhCondRemark}
\p_0 \leq \q, \quad p_0 \leq q \quad 1 + |\eta | + \frac{n}{p_{0}} - \frac{n}{q} + \alpha_{0} - \beta > 0, \quad \Lambda_{\alpha_0} \geq \Lambda_{\beta}.
\end{equation}
Let then
\begin{equation}\label{WYScalingInCor}
1+ \frac{1}{r} = \frac{1}{s_{0}} + \frac{1}{k},
\end{equation}
and use the Young inequality in Lorentz spaces
$$
\| \cdot \|_{L^{r}} \leq \| \cdot \|_{L^{s_{0}}} \| \cdot \|_{L^{k,\infty}},
$$
that is allowed if $1<r,s_{0},k < +\infty$\footnote{We are not interested in the case $k = +\infty$ to which corresponds a singular behaviour.}. We get
\begin{eqnarray}
  & &\left\||x|^{\beta} \partial^{\eta} \int_{0}^{t} e^{(t-s)\Delta}\mathbb{P} 
   \nabla \cdot F(x,s) \ ds \right\|_{\Lrqqtilde}   \nonumber \\ 
   &\leq & d_{\eta} \||x|^{\alpha_0} F\|_{L^{s_{0}}_{t}L^{p_{0}}_{|x|}L^{\p_{0}}_{\theta}} 
   \left\|\int_{\mathbb{\Rpiu}} \frac{dt}{t^{(1+|\eta | +\frac{n}{p_{0}} 
   -\frac{n}{q} +\alpha_{0}-\beta)/2}}\right\|_{L^{k,\infty}_{t}}   \nonumber \\
   & \leq &\ d_{\eta} \||x|^{\alpha_0} 
 F\|_{L^{s_{0}}_{t}L^{p_{0}}_{|x|}L^{\p_{0}}_{\theta}}, \nonumber 
\end{eqnarray}
provided that
\begin{equation}\label{DuhamProvvScal}
p_0 \leq q \quad (1 + |\eta | + \frac{n}{p_{0}} - \frac{n}{q} + \alpha_{0} - \beta)/2=\frac{1}{k}, \quad \Lambda_{\alpha_0} \geq \Lambda_{\beta},
\end{equation}
since
$$
\left\| \int_{\Rpiu} \frac{dt}{t^{1/k}}\right\|_{L^{k,\infty}_{t}} =1.
$$
By (\ref{WYScalingInCor}) and the second in (\ref{DuhamProvvScal})
\begin{equation}\label{DuhamProvvScal2}
\Omega(\alpha_{0}, p_{0}, s_{0}) = 1-|\eta | +\Omega(\beta,q,r).
\end{equation}
We now specify $F=u \otimes u$
\begin{eqnarray}
  & &\left\||x|^{\beta} \partial^{\eta} \int_{0}^{t} e^{(t-s)\Delta}\mathbb{P} 
   \nabla \cdot (u\otimes u)(x,s)  \ ds \right\|_{\Lrqqtilde}   \nonumber \\ 
  &\leq &  c_{\eta} \||x|^{\alpha_{0}} 
   |u|^{2}\|_{L^{s_{0}}_{t}L^{p_{0}}_{|x|}L^{\p_{0}}_{\theta}}  \nonumber \\
 &\leq &  c_{\eta} \||x|^{\alpha_{0}/2} 
   |u|\|^{2}_{L^{2s_{0}}_{t}L^{2p_{0}}_{|x|}L^{2\p_{0}}_{\theta}} \\
 & \leq  &c_{\eta} \||x|^{\alpha}u\|^{2}_{\Lspptilde}, \nonumber
  \end{eqnarray}
where we have set 
\begin{equation}\label{DuhamPivot}
(\alpha_{0}/2, 2s_{0},2p_{0},2\p_{0})=(\alpha,s,p,\p).
\end{equation}
Notice that $2\Omega(\alpha,s,p)= \Omega(\alpha_{0},s_{0},p_{0})$, $2\Lambda_{\alpha}=\Lambda_{\alpha_{0}}$ so (\ref{DuhamProvvScal2}), (\ref{DuhamPivot}) and the last in (\ref{DuhamProvvScal}) lead to 
$$
2\Omega(\alpha,p,s) = \Omega(\beta,q,r) +1 - |\eta |, \qquad
2\Lambda_{\alpha} \geq \Lambda_{\beta}.
$$
Finally notice that (\ref{WYScalingInCor}) and (\ref{DuhamProvvScal}) imply 
$$
r > s_{0}=s/2, \qquad q \geq p_{0}=p/2, \qquad \q \geq \p_{0}=\p/2.
$$
These conditions are furthermore consistent with the choice
$(\alpha,s,p,\p)=(\beta,r,q,\q)$, 
in such a way we recover inequality (\ref{DGHeatq=p})
$$
\left\||x|^{\beta} \partial^{\eta} \int_{0}^{t} e^{(t-s)\Delta} \mathbbm{P} \nabla 
       \cdot (u \otimes u) \  ds \right\|_{L^{r}_{t}L^{q}_{|x|}L^{\q}_{\theta}} 
       \leq  d_{\eta} \| |x|^{\beta} 
       u \|^{2}_{L^{r}_{t}L^{q}_{|x|}L^{\q}_{\theta}},
$$
provided that
$$
\Omega(\beta,q,r) = 1-|\eta|, \qquad \Lambda(\beta,q,\q) \geq 0.
$$

\end{proof}

\section{Proof of the main results}

We refer to the relations 
$$
\frac{2}{s} + \frac{n}{p} = 1 - \alpha, 
\quad \alpha_{0} = 1- \frac{n}{p_{0}}, 
\quad
\frac{2}{s} + \frac{n}{p} = 1 - \beta,
$$
as scaling assumptions.

As mentioned Theorems \ref{OurYZTheorem}, \ref{OurYZTheoremLoc} actually hold under weaker assumptions on $u_{0}$, we prove in fact:

\begin{theorem}
Theorem \ref{OurYZTheorem} holds if 
$u_{0} \in L^{2}(\Rn)$ is a divergence free vector field and 
$$
\| |x|^{\alpha_{0}} u_{0}\|_{L^{p_{0}}_{|x|}L^{\p_{0}}_{\theta}} < +\infty
$$ 
with
\begin{equation}\label{OurYZCond0}
\alpha_{0} \in [(2-n)/2, 2/(2+n)), \quad 
\alpha_{0} = 1- \frac{n}{p_{0}}, \quad 
\p_{0} \leq \frac{\p_{G}}{2},
\end{equation}
\begin{equation}\label{OurYZCond0Complicata}
\left \{
\begin{array}{lcr}
2 \leq p_{0} \leq \p_{G}/2   &  \mbox{if} & \p_{G} \leq 2n   \\
2 \leq p_{0} \leq \p_{G}/2, \quad p_{0} < \frac{2\p_{G}}{\p_{G} - 2n}    &  \mbox{if} & \p_{G} > 2n;
\end{array}\right.
\end{equation}
or
\begin{equation}\label{OurYZCond0bis}
\alpha_{0} \in [(2-n)/2, 2/(2+n)),  \quad
\alpha_{0} = 1- \frac{n}{p_{0}}, \quad 
\p_{0} \leq \frac{p}{2},
\end{equation}
\begin{equation}\label{OurYZCond0ComplicataBis}
\left \{
\begin{array}{lcr}
2 \leq p_{0} \leq p/2   &  \mbox{if} & p \leq 2n   \\
2 \leq p_{0} \leq p/2, \quad p_{0} < \frac{2 p}{p - 2n}    &  \mbox{if} & p > 2n; 
\end{array}\right.
\end{equation}
while $u$ has to satisfy (\ref{HPEquiv}) and 
(\ref{OurYZCondition0}, \ref{OurYZBound1},
\ref{OurYZCondition1}, \ref{OurYZCondition2}, \ref{OurYZCondition3}),
or
(\ref{OurYZCondition0bis}, \ref{OurYZBound1bis},
\ref{OurYZCondition1bis}, \ref{OurYZCondition2bis}, \ref{OurYZCondition3bis}).
\end{theorem}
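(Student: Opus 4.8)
The plan is to feed the a~priori bound into the integral formulation and to show that, under these hypotheses, $u$ automatically belongs to an \emph{unweighted} subcritical Serrin class $L^{r}_{T}L^{q}_{x}$ with $2/r+n/q=1$, $r<\infty$; once this is known the classical endpoint regularity criterion (see \cite{Ser,Sohr,Struwe}) forces $(0,T)\times\Rn$ to be a regular set. The role of the angular integrability is exactly to let us \emph{drop the weight} in the conclusion: we are given a bound carrying the homogeneous weight $|x|^{\alpha}$ (with $\alpha\neq0$ in general) and we wish to deduce a weightless bound, and by the definition (\ref{LambdaDef}) of $\Lambda$ the only obstruction to this trade is the condition $\Lambda(\alpha,p,\p)\ge 0$, i.e.\ $\p\ge\p_{G}$.

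Concretely: since $u$ satisfies (\ref{HPEquiv}), Theorem~\ref{DiffvsInt} yields the identity $u=e^{t\Delta}u_{0}-\int_{0}^{t}e^{(t-s)\Delta}\mathbb{P}\nabla\cdot(u\otimes u)\,ds$, so it suffices to estimate the two summands separately in one common space. I would fix a target pair $(q,r)$ with $2/r+n/q=1$, $q,r<\infty$, in the interior of the Serrin range, and take the target space $L^{r}_{t}L^{q}_{|x|}L^{\q}_{\theta}$ with $\q=q$ and weight exponent $\beta=0$; note $\Lambda(0,q,q)=0$, $\Omega(0,q,r)=1$. For the Duhamel term I apply Proposition~\ref{DDecayCor} with input $(\alpha,p,s,\p)$ — finite by (\ref{OurYZBound1}), resp.\ (\ref{OurYZBound1bis}) — and output $(\beta,q,r,\q)=(0,q,r,q)$, $|\eta|=0$: the scaling condition $2\Omega(\alpha,p,s)=\Omega(0,q,r)+1$ holds because $2/s+n/p=1-\alpha$ gives $\Omega(\alpha,p,s)=1$, while the angular condition $2\Lambda(\alpha,p,\p)\ge\Lambda(0,q,q)=0$ is precisely $\p\ge(n-1)p/(\alpha p+n-1)$, i.e.\ $\p\ge\p_{G}$ as in (\ref{OurYZCondition3bis}) (and is implied by (\ref{OurYZCondition3}) when $\alpha<0$). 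For the linear term I apply Proposition~\ref{IDecayCor} with input $(\alpha_{0},p_{0},\p_{0})$ and the same output $(0,q,r,q)$, $|\eta|=0$: the scaling condition becomes $\alpha_{0}=1-n/p_{0}$ and the angular condition $\Lambda(\alpha_{0},p_{0},\p_{0})\ge 0$. Summing the two estimates gives $u\in L^{r}_{T}L^{q}_{x}$ with $2/r+n/q=1$, $2<r<\infty$, and the classical criterion closes the argument; the endpoint $s=2/(1-\alpha)$ — which by scaling forces $p=\infty$ and $(q,r)=(\infty,2)$ — is handled by the boundary versions of the two propositions, landing in $L^{2}_{T}L^{\infty}_{x}$.

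The actual work, and the reason for the long lists of hypotheses, is the compatibility check: one must produce a \emph{single} admissible pair $(q,r)$ on the Serrin line — and suitable interpolation ``pivot'' exponents inside the proof of Proposition~\ref{DDecayCor} — simultaneously meeting $2\le p\le 2q$, $2<s<2r<\infty$, $2\le\p\le2\q$, $\alpha<n/2-n/p$ (for Proposition~\ref{DDecayCor}) and $1\le p_{0}\le q<np_{0}/(p_{0}-2)$, $p_{0}<r<\infty$, $\p_{0}\le q$ (for Proposition~\ref{IDecayCor}). For $\alpha<0$ one has $p\le\p_{G}$, so the binding lower bound on $q$ is $\p_{G}/2$, and the extra floor $2n$ in $\p_{G}$ is exactly what keeps $\p\le2q$ compatible; demanding $\p_{G}/2<np_{0}/(p_{0}-2)$ then rearranges to $p_{0}<2\p_{G}/(\p_{G}-2n)$ when $\p_{G}>2n$, which is clause (\ref{OurYZCond0Complicata}), while $\p_{0}\le q$ is guaranteed by $\p_{0}\le\p_{G}/2$ in (\ref{OurYZCond0}). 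For $\alpha\ge0$ one has $\p_{G}<p$, so the binding bound is $p/2$ and the same manipulations produce (\ref{OurYZCond0bis}), (\ref{OurYZCond0ComplicataBis}) with $p/2$ in place of $\p_{G}/2$; the range $\alpha_{0}\in[(2-n)/2,\,2/(2+n))$ is just the restatement $2\le p_{0}<n+2$ dictated by $q<np_{0}/(p_{0}-2)$. I expect this bookkeeping — not the analysis, which is already packaged in Section~3 — to be the only real obstacle, since no fixed-point iteration is required ($u$ enters the Duhamel term only quadratically and the a~priori hypothesis already supplies the needed finiteness), apart from the separate, limiting treatment of the $L^{\infty}_{x}$ endpoint.
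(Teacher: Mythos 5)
Your overall route is the same as the paper's: pass to the integral formulation, send the linear term and the Duhamel term into an unweighted space on the Serrin line $2/r+n/q=1$ using Propositions \ref{IDecayCor} and \ref{DDecayCor} respectively, take $q=\p_{G}/2$ when $\alpha<0$ and $q=p/2$ when $\alpha\ge0$, and read off the constraints on $p_{0}$ from $p_{0}\le q<np_{0}/(p_{0}-2)$; that bookkeeping, as you carry it out, matches the paper. One small correction: since the hypothesis only bounds $\p$ from \emph{below} ($\p\ge\p_{G}$, possibly $\p=+\infty$), the condition $\p\le 2\q=2q$ of Proposition \ref{DDecayCor} need not hold for $\p$ itself; you must first lower the angular exponent to $\p_{G}$ using the monotonicity of the $L^{\p}_{\theta}$ norms on the sphere, so the compatibility condition that the floor $2n$ in $\p_{G}$ secures is $\p_{G}\le 2q$, not $\p\le 2q$.

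The genuine gap is in the linear term. You impose the angular condition $\Lambda(\alpha_{0},p_{0},\p_{0})\ge 0$ and never discharge it, but it is \emph{not} among the hypotheses of the theorem and does not follow from them: with $\alpha_{0}=1-n/p_{0}$ one computes $\Lambda(\alpha_{0},p_{0},\p_{0})=1-\tfrac{1}{p_{0}}-\tfrac{n-1}{\p_{0}}$, and the statement only bounds $\p_{0}$ from \emph{above} ($\p_{0}\le\p_{G}/2$, resp.\ $\p_{0}\le p/2$), so for instance any $\p_{0}\le n-1$ makes this quantity negative and renders your application of (\ref{IHeat}) illegitimate. This case is non-vacuous and is exactly where an extra idea is required: when $\Lambda_{\alpha_{0},0}:=\Lambda(\alpha_{0},p_{0},\p_{0})-\Lambda(0,q,q)<0$ one must use the localized estimate (\ref{IHeatLoc}), valid only inside the parabola $\Pi(R)=\{|x|/\sqrt{t}\le R\}$ and with constant $c_{0}R^{-\Lambda_{\alpha_{0},0}}$, to conclude $\|\mathbbm{1}_{\Pi(R)}u\|_{L^{r}_{T}L^{q}_{x}}<+\infty$ for every fixed $R$; since the Duhamel bound is global, regularity of all of $(0,T)\times\Rn$ then follows by letting $R\to+\infty$. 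Without this localization-and-limit step your argument proves the theorem only under the additional, unstated hypothesis $\Lambda(\alpha_{0},p_{0},\p_{0})\ge 0$.
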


\begin{proof}
Since we want to use the regularity condition (\ref{SerrinNorm}) we need to show that
\begin{equation}\label{OurYZCondition1Proof}
\|u\|_{L^{r}_{T}L^{q}_{x}} < +\infty, \quad \mbox{with} \quad \frac{2}{r} +\frac{n}{q} =1.
\end{equation}
Let's start by the integral representation
\begin{equation}\nonumber
u  =  e^{t \Delta}u_{0} - \int_{0}^{t}e^{(t-s)\Delta}\mathbb{P}\nabla \cdot 
(u \otimes u)(s) \ ds 
\end{equation}
and distinguish the cases $\alpha \in ((1-n)/2,0)$ and $\alpha \in [0,1/2]$.
\ni \subsection*{Case $\alpha \in ((1-n)/2,0)$} 
\begin{eqnarray}\nonumber
\| u \|_{L^{r}_{T}L^{q}_{x}} &\leq & \|e^{t\Delta}u_{0}\|_{L^{r}_{T}L^{q}_{x}}+
\left\|\int_{0}^{t}e^{(t-s)\Delta}\mathbb{P}\nabla \cdot (u \otimes u)(s) \ ds \right\|_{L^{r}_{T}L^{q}_{x}} \\ \nonumber
&=& I + II.
\end{eqnarray}
By the scaling assumption and Proposition \ref{IDecayCor}
\begin{equation}\label{I}
I \leq c_{0} \||x|^{\alpha_{0}}u_{0}\|_{L^{p_{0}}_{|x|}L^{\p_{0}}_{\theta}}, 
\end{equation}
provided that 
\begin{equation}\label{I2}
  p_{0} \leq q < \frac{np_{0}}{p_{0} - 2}, \quad \p_{0} \leq q,
   \quad \Lambda(\alpha_{0}, p_{0}, \p_{0}) \geq 0.
\end{equation}
Actually the condition $\Lambda(\alpha_{0}, p_{0}, \p_{0}) \geq 0$ is not necessary in order to prove the Theorem, we assume it for now in order to avoid some technicalities in the proof. We will show how to remove it at the end of the proof.
We use Proposition \ref{DDecayCor} and scaling
to bound
\begin{equation}\nonumber
II \leq  d_{0} 
\||x|^{\alpha}u\|^{2}_{L^{s}_{T}L^{p}_{|x|}L^{\p_{G}}_{\theta}}
\lesssim
\||x|^{\alpha}u\|^{2}_{L^{s}_{T}\Lpptilde}, 
\end{equation}
provided that
\begin{equation}\label{OurYZCond2Proof}
\Lambda(\alpha,p, \p) \geq 0,
\end{equation}

\begin{equation}\label{OurYZCond3Proof}
2 \leq p \leq +\infty, 
\quad 2 < s < +\infty, 
\quad p/2, \ \p_{G} /2 \leq q, 
\quad s/2 < r.
\end{equation}
Condition (\ref{OurYZCond2Proof}) is ensured by
\begin{equation}\label{alsoThat3}
\p \geq \frac{(n-1)p}{\alpha p +n -1}.
\end{equation}
Notice also that (\ref{alsoThat3}), the scaling and $\alpha <0$ imply $\frac{n}{1-\alpha} < p \leq \frac{1-n}{\alpha}$, so the widest range for $p$ is attained as $\alpha \rightarrow 0^{-}$.
Then we need a couple $(r,q)$ such that (\ref{OurYZCond3Proof}) is consistent with $\frac{2}{r} + \frac{n}{q}=1$. We choose $q = \p_{G}/2 = \max \left( n, \frac{(n-1)p}{2\alpha p + 2(n-1)} \right)$. 
This is allowed by $(1-n)/2 < \alpha$, 
we have indeed
$$
\frac{2}{r} = 1- \frac{n}{q} = 1- \frac{2n \alpha}{n-1} + \frac{2n}{p} \Rightarrow 
\frac{2}{r} - \frac{4}{s} = \frac{1-n -2 \alpha}{n-1},
$$
so
$$
(1-n)/2 < \alpha \Rightarrow s/2 < r; 
$$
Finally (\ref{I2}) becomes 
$$
p_{0} \leq \frac{\p_{G}}{2} < \frac{np_{0}}{p_{0}-2},
$$
that by a straightforward calculation leads to (\ref{OurYZCond0Complicata}) and $\alpha_{0} \in [(2-n)/2, 2/(2+n))$.

\subsection*{Case $\alpha \in [0,1/2)$} 
The only difference is in the choice of $(r,q)$. Here we set $q = p/2$. In such a way (\ref{OurYZCond3Proof}) is ensured by $\alpha < 1/2$, in fact
$$
\frac{2}{r} = 1- \frac{2n}{p} \Rightarrow \frac{2}{r} - \frac{4}{s} = -1 + 2 \alpha,
$$  
so
$$
\alpha < 1/2 \Rightarrow s/2 < r.
$$
Notice that in this case we do not have the restriction $p\leq \frac{1-n}{\alpha}$.
Then (\ref{I2}) becomes
$$
p_{0} \leq \frac{q}{2} < \frac{np_{0}}{p_{0}-2},
$$
that by a straightforward calculation leads to (\ref{OurYZCond0ComplicataBis}), $\alpha_{0} \in [(2-n)/2, 2/(2+n))$.
The choice $q = p/2, \beta = 0$ and the scaling assumptions
force to be $p > 2n$.

We show how the assumption $\Lambda(\alpha_{0}, p_{0}, \p_{0}) \geq 0$ can be removed. 
Let us write simply $\Lambda$ instead of $\Lambda(\alpha_{0}, p_{0}, \p_{0})$ and suppose $\Lambda < 0 $.  We can use the localized estimate (\ref{IHeatLoc}) to get the bound
$$
\|\mathbbm{1}_{\Pi(R)} u \|_{L^{r}_{T}L^{q}_{x}} \leq 
R^{-\Lambda} c_{0} \||x|^{\alpha_{0}} u_{0} \|_{L^{p_{0}}_{|x|}L^{\p_{0}}_{\theta}} +
d_{0} \||x|^{\alpha} u \|_{L^{s}_{T}L^{p}_{|x|}L^{\p}_{\theta}}
$$
where 
$$
\Pi(R) := \left\{ (t,x) \in \Rpiu \times \Rn: \quad
 \frac{|x|}{\sqrt{t}} \leq R \right\}.
$$
So $(0,T) \times \Rn$ is a regular set by taking the limit $R \rightarrow +\infty$.

\end{proof}

\begin{theorem}
Theorem \ref{OurYZTheoremLoc} holds if $u_{0} \in H^{2} \cap L^{2}_{|x|^{2-n}dx}$
is a divergence free vector field such that
$$  
   \| |x|^{\alpha_{0}} u_{0}\|_{L^{p_{0}}_{|x|}L^{\p_{0}}_{\theta}} < +\infty,
   \quad \Lambda(\alpha_{0}, p_{0}, \p_{0}) \geq 0,
$$ 
with
\begin{equation}\label{OurYZCond0Loc}
\alpha_{0} \in \left[ 1-n, \frac{2-n}{2+n}\right), \quad
\alpha_{0} = 1- \frac{n}{p_{0}}, \quad 
\p_{0} \leq \frac{p}{2},
\end{equation}

\begin{equation}\label{OurYZCond0ComplicataLoc}
\left \{
\begin{array}{lcr}
1 \leq p_{0} \leq p/2   &  \mbox{if} & p \leq n   \\
1 \leq p_{0} \leq p/2, \quad p_{0} < \frac{p}{p - n}    &  \mbox{if} & p > n; 
\end{array}\right.
\end{equation}
or
\begin{equation}\label{OurYZCond0bisLoc}
\alpha_{0} \in \left[ 1-(1-\alpha)n, 1- (1-\alpha)\frac{2n}{2+n} \right), \quad
\alpha_{0} = 1- \frac{n}{p_{0}}, \quad 
\p_{0} \leq \frac{p}{2}, 
\end{equation}
\begin{equation}\label{OurYZCond0ComplicataBisLoc}
\frac{1}{1-\alpha} \leq p_{0} \leq \frac{p}{2}, \quad p_{0} < \frac{p}{(1-\alpha)p -n};
\end{equation}
while $u$ has to satisfy (\ref{HPEquiv}) and (\ref{OurYZCondition0Loc}, \ref{OurYZBound1Loc},
\ref{OurYZCondition1Loc}, \ref{OurYZCondition2Loc}, \ref{OurYZCondition3Loc}),
or (\ref{OurYZCondition0bisLoc}, \ref{OurYZBound1bisLoc},
\ref{OurYZCondition1bisLoc}, \ref{OurYZCondition2bisLoc}, \ref{OurYZCondition3bisLoc}).

\end{theorem}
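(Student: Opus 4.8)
The argument parallels the proof of the refined version of Theorem~\ref{OurYZTheorem} given just above; the only structural change is that, instead of reducing matters to the classical Serrin criterion~(\ref{SerrinNorm}), one reduces them to the weighted criterion of Theorem~\ref{YZTheoremSS} with centre $\bar x=0$. The plan is to exhibit a (negative) weight exponent $\beta$ and Lebesgue exponents $q,r$ with
$$
\frac{2}{r}+\frac{n}{q}=1-\beta,\qquad -1\le\beta<1,
$$
such that $\||x|^{\beta}u\|_{\Lrq}<+\infty$, and then to invoke Theorem~\ref{YZTheoremSS}: its initial data hypotheses $u_{0}\in L^{2}$ and $\||x|^{1-n/2}u_{0}\|_{L^{2}_{x}}<+\infty$ are precisely $u_{0}\in L^{2}\cap L^{2}_{|x|^{2-n}dx}$, while the extra regularity $u_{0}\in H^{2}$ serves to make sense of the integral representation and to control $e^{t\Delta}u_{0}$ in the weighted space-time norm in the exponent ranges not reached by Proposition~\ref{IDecayCor}.

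Starting from $u=e^{t\Delta}u_{0}-\int_{0}^{t}e^{(t-s)\Delta}\mathbb{P}\nabla\cdot(u\otimes u)(s)\,ds=:I+II$, one would bound $I$ by Proposition~\ref{IDecayCor}. The scaling relation $\alpha_{0}=1-n/p_{0}$ gives $\Omega(\alpha_{0},p_{0},\infty)=1$, so the first part of~(\ref{OmegaScaling}) becomes $\Omega(\beta,q,r)=1$, that is $\frac{2}{r}+\frac{n}{q}=1-\beta$, matching the target, while the second part $\Lambda(\alpha_{0},p_{0},\p_{0})\ge\Lambda(\beta,q,q)=\beta$ holds because $\Lambda(\alpha_{0},p_{0},\p_{0})\ge0$ by hypothesis and $\beta<0$. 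Hence
$$
\||x|^{\beta}I\|_{\Lrq}\lesssim\||x|^{\alpha_{0}}u_{0}\|_{\Lpptilde},
$$
provided the remaining constraints $\beta>-n/q$ and $q<np_{0}/((\alpha_{0}-\beta)p_{0}+n-2)$ hold; these are met by choosing $p_{0}$ small, which is precisely why the admissible set for $(\alpha_{0},p_{0},\p_{0})$ shrinks as $p$ grows, yielding (\ref{OurYZCond0Loc})--(\ref{OurYZCond0ComplicataBisLoc}), and in the borderline ranges (in particular $p$ close to $+\infty$) one would instead control $e^{t\Delta}u_{0}$ directly by a heat estimate using $u_{0}\in H^{2}\cap L^{2}_{|x|^{2-n}dx}$.

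For $II$ one would apply Proposition~\ref{DDecayCor} with input $(\alpha,p,\p_{L},s)$ and output $(\beta,q,q,r)$, so that the target is the classical $L^{q}_{x}$, absorbing the loss through the monotonicity of the angular norms and the hypothesis $\p\ge\p_{L}$:
$$
\||x|^{\beta}II\|_{\Lrq}\lesssim\||x|^{\alpha}u\|^{2}_{L^{s}_{T}L^{p}_{|x|}L^{\p_{L}}_{\theta}}\le\||x|^{\alpha}u\|^{2}_{L^{s}_{T}\Lpptilde}.
$$
The definition~(\ref{PtildeDef}) of $\p_{L}$ is tuned so that $\Lambda(\alpha,p,\p_{L})=-\tfrac12$ when $-\tfrac12\le\alpha<0$ and $\Lambda(\alpha,p,\p_{L})=\alpha-\tfrac12$ when $0\le\alpha<1$; hence the condition $2\Lambda(\alpha,p,\p_{L})\ge\Lambda(\beta,q,q)=\beta$ in~(\ref{DGOmegaScaling}) forces $\beta=-1$ in the first regime, while in the second the strict inequality in~(\ref{OurYZCondition3bisLoc}) allows $\beta$ to be taken slightly above $2\alpha-1$, which is what is needed for the strict bound $s<2r$. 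One then chooses $(q,r)$ by splitting on the sign of $\alpha$, exactly as in the proof of Theorem~\ref{OurYZTheorem} but now carrying the negative weight $\beta$: the scaling assumption $2/s+n/p=1-\alpha$ together with $\alpha\ge-1/2$ provides an admissible pair, $\alpha\ge-1/2$ being exactly what yields $\p_{L}\le p$ (so that~(\ref{OurYZBound1Loc}) can be weaker than~(\ref{MainAssumptYZ})) and what reconciles $s<2r$ with $q\ge p/2$ and $\frac{2}{r}+\frac{n}{q}=1-\beta$. It remains to check that the resulting $(\beta,q,r)$ falls into the admissible range of Theorem~\ref{YZTheoremSS} (when $\beta=-1$ this is its endpoint, handled by the first subcase there with $1<r<+\infty$ and $n/2<q<+\infty$), which gives that $(0,T)\times\{0\}$ is regular.

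The hard part will be this last bookkeeping: one has to fit a single triple $(\beta,q,r)$ simultaneously into the constraints of Proposition~\ref{DDecayCor} ($2\le p\le2q$, $2<s<2r$, $2\le\p_{L}\le2q$, $\beta>-n/q$, the scaling identity and the $\Lambda$-inequality), into those of Proposition~\ref{IDecayCor} (or, when $p$ is too large, into the heat estimate for $u_{0}\in H^{2}$), and into the admissibility range of Theorem~\ref{YZTheoremSS}. The tightest regime is $\alpha\in[-1/2,0)$, where $\beta=-1$ is forced and coincides with the endpoint of Theorem~\ref{YZTheoremSS}, so the estimate for $I$ has no slack; this is precisely where the extra hypothesis $u_{0}\in H^{2}\cap L^{2}_{|x|^{2-n}dx}$, together with the separate care needed for the endpoints $p=+\infty$ and $\alpha=-1/2$, becomes genuinely necessary. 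The accompanying determination of the admissible intervals for $(\alpha_{0},p_{0},\p_{0})$ producing (\ref{OurYZCond0Loc})--(\ref{OurYZCond0ComplicataBisLoc}) is routine but lengthy, just as for the refined Theorem~\ref{OurYZTheorem}.
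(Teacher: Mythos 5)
Your proposal is correct and follows essentially the same route as the paper: reduce to the weighted criterion of Theorem \ref{YZTheoremSS}, split $u$ into the free evolution and the Duhamel term, bound these via Propositions \ref{IDecayCor} and \ref{DDecayCor} with $q=p/2$, and exploit the identities $\Lambda(\alpha,p,\p_{L})=-\tfrac12$ (resp.\ $\alpha-\tfrac12$) to force $\beta=-1$ for $\alpha\in[-1/2,0)$ and $\beta=2\alpha-1+\varepsilon$ for $\alpha\in[0,1)$, exactly as in the paper's argument. The bookkeeping you defer (deriving (\ref{OurYZCond0ComplicataLoc}) and (\ref{OurYZCond0ComplicataBisLoc}) from $p_{0}\le q<np_{0}/((\alpha_{0}-\beta)p_{0}+n-2)$) is carried out in the paper exactly as you describe.
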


\begin{proof}
Since we want to use directly Theorem \ref{YZTheoremSS} so we need to show that
\begin{equation}\label{OurYZCondition1ProofLoc}
\| |x|^{\beta} u \|_{L^{r}_{T}L^{q}_{x}} < +\infty, \quad \mbox{with} \quad \frac{2}{r} +\frac{n}{q} =1 - \beta.
\end{equation}
Let's start by the integral representation
\begin{equation}\nonumber
u  =  e^{t \Delta}u_{0} - \int_{0}^{t}e^{(t-s)\Delta}\mathbb{P}\nabla \cdot 
(u \otimes u)(s) \ ds.  
\end{equation}
and distinguish the cases $\alpha \in [-1/2,0)$ and $\alpha \in [0,1)$.
\ni \subsection*{Case $\alpha \in [-1/2,0)$} 
\begin{eqnarray}\nonumber
\| |x|^{\beta} u \|_{L^{r}_{T}L^{q}_{x}} &\leq & \| |x|^{\beta} e^{t\Delta}u_{0}\|_{L^{r}_{T}L^{q}_{x}} \\ \nonumber
&+& \left\| |x|^{\beta} \int_{0}^{t}e^{(t-s)\Delta}\mathbb{P}\nabla \cdot (u
\otimes u)(s) \ ds \right\|_{L^{r}_{T}L^{q}_{x}} \\ \nonumber
&=& I + II.
\end{eqnarray}
By the scaling assumption and Poposition \ref{IDecayCor} 
\begin{equation}\label{ILoc}
I \leq c_{0} \||x|^{\alpha_{0}}u_{0}\|_{L^{p_{0}}_{|x|}L^{\p_{0}}_{\theta}}
\end{equation}
provided that 
\begin{equation}\label{I2Loc}
\quad p_{0} \leq q < \frac{np_{0}}{(\alpha_{0} - \beta)p +n-2}, \quad \p_{0} \leq q, \quad \Lambda(\alpha_{0}, p_{0}, \p_{0}) \geq 0.
\end{equation}
We use Proposition \ref{DDecayCor} and scaling
to bound
\begin{equation}\nonumber
II \leq   d_{0} \| |x|^{\alpha} u \|^{2}_{L^{s}_{T}\Lpptilde}, 
\end{equation}
provided that
\begin{equation}\label{OurYZCond2ProofLoc}
2 \Lambda(\alpha,p, \p) \geq \beta,
\end{equation}

\begin{equation}\label{OurYZCond3ProofLoc}
2 \leq p \leq +\infty, 
\quad 2 < s < +\infty, 
\quad p/2, \ \p /2 \leq q, 
\quad s/2 < r.
\end{equation}
Condition (\ref{OurYZCond2ProofLoc}) is ensured by
\begin{equation}\label{pLocInProofLoc}
\p \geq \frac{2(n-1)}{(2\alpha -\beta)p + 2(n-1)}.
\end{equation}
Then we need a triple $(\beta,r,q)$ such that (\ref{OurYZCond3ProofLoc}) is consistent with $\frac{2}{r} + \frac{n}{q} = 1 -\beta$. 
We are using Theorem \ref{YZTheoremSS} so it is necessary to restrict to $-1\leq \beta$ and, in order to get the lowest value for $\p$, we choose $\beta =-1$. In such a way (\ref{pLocInProofLoc}) becomes (\ref{OurYZCondition3Loc}). By this choice we have 
$$
\p \leq p \quad \mbox{if} \quad -1/2 \leq \alpha,
$$
that is in fact the range of $\alpha$ we have restricted on.
Then we choose $q=p/2$ so by the scaling relation
$$
\frac{2}{r} - \frac{4}{s} = 2 \alpha - 1  \leq 0,
$$
that is consistent with $s/2 < r$.
Because of the choice $q= p/2, \beta = -1$ and the scaling we have to require
$p > n$.
Then (\ref{I2Loc}) becomes  
$$
\quad p_{0} \leq q < \frac{np_{0}}{2p_{0} - 2},
$$ 
that by a straightforward calculation leads to (\ref{OurYZCond0ComplicataLoc}) and $\alpha_{0} \in \left[ 1-n, \frac{2-n}{2+n}  \right)$.

\subsection*{Case $\alpha \in [0,1)$} 
The only difference is again in the choice of $(\beta, r, q)$. 
Since $\alpha \geq 0$ we can reach smaller values for $\p$ by setting 
$2\alpha - \beta = 1 - \varepsilon$ in (\ref{pLocInProofLoc}),
with $\varepsilon > 0$ arbitrarily small. In such a way
$$
\p \geq \frac{2(n-1)p}{(1 - \varepsilon)p  + 2(n-1)}.
$$ 
Then we choose 
$$
(\beta, r, q) = \left( 2 \alpha - 1 + \varepsilon, 
\frac{2s}{4 - \varepsilon s}, 
\frac{p}{2}\right).
$$
It is easy to check that this is consistent with the scaling relation. 
Notice also that the scaling assumptions force to be
$p > n/(1-\alpha)$.
Finally, by (\ref{I2Loc}) and scaling we have
$$
p_{0} \leq q < \frac{np_{0}}{(2 - 2\alpha)p_{0} -2},
$$
that by a straightforward calculation leads to (\ref{OurYZCond0ComplicataBisLoc}) and $$\alpha_{0} \in \left[ 1-(1-\alpha)n, 1-(1-\alpha)\frac{2n}{2+n} \right).$$   
\end{proof}

\section{Outlooks and remarks}
 
In this paper we develop a technique that makes able to 
get new regularity criteria for weak solutions of (\ref{CauchyNS}) from  
a given one.
In principle this machinery could be applied to many different criteria known in 
literature even if we basically focus on (\ref{SerrinNorm}) 
and on Theorem \ref{YZTheoremSS}.

The relations between the indexes in the main theorems are not the most general possible, for instance different choices are allowed than $q=p/2$ or $q=\p_{G}/2$ in the proofs. Anyway we prefer to lose a little in generality in order to get simpler statements.

In the second section we prove time decay estimates for the heat and Oseen kernels that
we consider of independent interest. In particular we plan to use them to study the small data problem for (\ref{CauchyNS}) in future works.


\section{Acknowledgements}

The author would like to thank professor Piero D'Ancona for constant help and suggestions and professor Keith Rogers for useful discussions and reading the paper.

\end{document}